\numberwithin{equation}{section} 
\newtheorem{thm}[equation]{Theorem} 
\newtheorem{cond}[equation]{Conditions}
\newtheorem{lemma}[equation]{Lemma} 
\newtheorem{example}[equation]{Example}
\newtheorem{remark}[equation]{Remark}
\newtheorem*{thm*}{Theorem}
\DeclareMathAlphabet{\mathpzc}{OT1}{pzc}{m}{it}
\DeclareMathOperator{\HH}{HH}
\DeclareMathOperator{\Ext}{Ext}
\DeclareMathOperator{\Hom}{Hom}
\DeclareMathOperator{\ch}{char}
\newcommand{\B}{\mathbb B}
\newcommand{\K}{\mathbb K}
\newcommand{\DOT}{\setlength{\unitlength}{1pt}\begin{picture}(2.5,2)
               (1,1)\put(2,3.5){\circle*{3}}\end{picture}}
\newcommand{\Z}{{\mathbb Z}}
\newcommand{\N}{{\mathbb N}}
\newcommand{\HHD}{{\rm HH}^{\DOT}}
\newcommand{\q}{\mathbf{q}}
\newcommand{\m}{\mathbf{m}}
\begin{document}


\title[Hochschild cohomology of group extensions]
{Hochschild cohomology of group extensions of quantum complete intersections}



\author{Lauren Grimley} 
\address{Department of Mathematics\\Texas A\&M University \\ College Station, TX 77843}
\email{lgrimley@math.tamu.edu}


\date{May 17, 2016}

\begin{abstract}
We formulate the Gerstenhaber algebra structure of Hochschild cohomology of finite group extensions of some quantum complete intersections.  When the group is trivial, this work characterizes the graded Lie brackets on Hochschild cohomology of these quantum complete intersections, previously only known for a few cases.  As an example, we compute the algebra structure for two generator quantum complete intersections extended by select groups.
\end{abstract}

\maketitle


\section{Introduction}

The behavior of Hochschild cohomology of quantum complete intersections has been shown to vary greatly based on the choice of quantum coefficients \cite{BE, BGMS, O}, exhibiting behaviors unseen with commutative algebras.  The quantum complete intersections of interest in this document are noncommutative generalizations of truncated polynomial rings.  Formally, if $k$ is a field, $m_1, m_2, .... , m_n$ are positive integers, and $\q=\{q_{i,j}\}_{i,j \in \{1,2,...,n\}}$ such that $q_{i,j} \in k^*$ for all $i,j \in \{1,2, ..., n\}$, $q_{j,i}=q_{i,j}^{-1}$, and $q_{i,i}=-1$, then $$\Lambda_{\q}^{\m} = k \langle x_1, x_2, ..., x_n | ~x_i x_j = - q_{i,j}x_j x_i, ~x_i^{m_i}=0 \textrm{ for all } i,j \in \{1,2, ... n\}\rangle$$ is the quantum complete intersection on $\q$ and $\m=(m_1, m_2, ..., m_n)$.  


In \cite{H}, Happel asked: does the vanishing of Hochschild cohomology in high degrees of a finite dimensional algebra over a field imply finite global dimension?  Avramov and Iyengar showed  in \cite{AI} that, for commutative algebras, finite Hochschild cohomology does imply finite global dimension.  Quantum complete intersections, however, provided the first examples for which this implication was shown not to hold.  In \cite{BGMS}, Buchweitz, Green, Madsen and Solberg showed that the two generator quantum complete intersections, $\Lambda_{\q}^{(2,2)}$, with $\q$ not a root of unity have infinite global dimension and finite Hochschild cohomology.  When the quantum coefficient is a root of unity, $\Lambda_{\q}^{(2,2)}$ has infinite Hochschild cohomology with large gaps determined by the root of unity \cite{BGMS}.  Bergh and Erdmann generalized this result in \cite{BE}, showing that the two generator quantum complete intersections, $\Lambda_{\q}^{(m_1,m_2)}$, have finite Hochschild cohomology if and only if the quantum coefficient is not a root of unity.  In \cite{O}, Oppermann furthered this study by showing that the Hochschild cohomology of finite quantum complete intersections, $\Lambda_{\q}^{(m_1,m_2, ..., m_n)}$, is intimately tied to the choice of quantum coefficients.  


While the algebra structure of Hochschild cohomology of quantum complete intersections with respect to the cup product has been studied, Hochschild cohomology of an associative algebra also admits a graded Lie bracket which is less understood.  The graded Lie algebra structure on Hochschild cohomology has been studied for monomial algebras \cite{CGX, SF, Stra}, skew group algebras \cite{SW}, tensor products \cite{LZ}, group extensions of polynomial rings \cite{NW2} and skew polynomial rings \cite{WZ}, and the quantum complete intersections $\Lambda_{\q}^{(2,2)}$ \cite{GNW}.  Most recently in \cite{BKL}, Benson, Kessar, and Linkelman studied the Lie algebra structure of the first Hochschild cohomology $k$-module of $\Lambda_{\q}^{(p,p)}$ for a prime $p$ and $\q$ of order dividing $p-1$.

In this paper, we focus on $\Lambda_{\q}^{(2,2,...,2)}$ and allow a finite group, $G$, to act diagonally on the basis $\{x_1, x_2, ..., x_n\}$ in order to study Hochschild cohomology of the corresponding skew group algebra.  Hochschild cohomology in this case is controlled by the quantum coefficients as well as the choice of group action.  We study the Gerstenhaber algebra structure of Hochschild cohomology of group extensions of these quantum complete intersections, providing explicit formulas for the cup product and graded Lie bracket which encode the noncommutative behaviors.  When the group is trivial, this work characterizes the graded Lie algebra structure of Hochschild cohomology of the quantum complete intersections $\Lambda_{\q}^{(2,2,...,2)}$, extending the previous results of \cite{BE, BGMS, O}.

We utilize the notion of twisted tensor products given by Bergh and Oppermann in \cite{BO} and adapt the technique of Wambst in \cite{W} to compute the Hochschild cohomology in Section \ref{vs}.  As in \cite{W}, we restrict to the $\ch k = 0$ case as it is required by the contracting homotopy used in the proof.  We also restrict to diagonal actions because the deconstruction used does not generalize immediately to nondiagonal actions.  We use techniques adapted from \cite{BGMS} to compute the cup product in Section \ref{cup}.  Using an alternative bracket description given by Negron and Witherspoon in \cite{NW} and \cite{NW2}, we compute the bracket structure in Section \ref{bracket}.  The bracket definition given in \cite{NW} works well with Koszul algebras.  We focus on $\Lambda_{\q}^{(2,2,...,2)}$ because it is the exclusive case for which $\Lambda_{\q}^{\m}$ is Koszul.  As an example, these structures are computed for several cases when $n=2$ in Section \ref{ex} and \ref{equal}.  These example cases are generalizations of, and can be compared to, the examples in \cite[Section 5]{GNW}, \cite[Section 3]{BGMS}, and \cite[Section 3]{BE} when the characteristic of the field is $0$.



For the remainder of the paper, we assume $k$ is a field of characteristic $0$.  Unless otherwise noted, $\otimes=\otimes_k$ and all modules are left modules.


\section{Preliminaries}\label{preliminaries}

Let $A$ be a $k$-algebra.  Let $A^e=A \otimes A^{op}$ be the \textsl{enveloping algebra} of $A$ where $A^{op}$ is the opposite algebra.  Because $A$ is a $k$-algebra, the \textsl{Hochschild cohomology} of $A$ is $$\HH^*(A) = \Ext^*_{A^e}(A, A).$$  Hochschild cohomology of $A$ is a \textsl{Gerstenhaber algebra}, having a graded commutative algebra structure given by the cup product, $\smile$, and a graded Lie bracket, $[-,-]$, also called a Gerstenhaber bracket, such that $[-, f]$ is a graded derivation with respect to $\smile$.  



Any $k$-algebra $A$ has a projective $A^e$-module resolution given by the bar resolution.  The \textsl{bar resolution} of $A$, $\mathbb{B}(A)$, is given by
$$\mathbb{B}(A): ...\xrightarrow{\delta_3} A^{\otimes 4} \xrightarrow{\delta_2} A \otimes A \otimes A \xrightarrow{\delta_1} A \otimes A \xrightarrow{\delta_0} A  \rightarrow 0$$
where
$$d_n(a_0 \otimes a_1 \otimes ... \otimes a_{n+1}) = \sum_{m=0}^{n} (-1)^m a_0 \otimes a_1 \otimes ... \otimes a_m a_{m+1} \otimes ... \otimes a_{n+1} $$ 
for $a_0, ..., a_{n+1} \in A$.



Recall the bar resolution admits a comultiplication given by the \textsl{diagonal map} on $\mathbb{B}(A)$, $\Delta_{\mathbb{B}(A)} : \mathbb{B}(A) \rightarrow \mathbb{B}(A) \otimes_A \mathbb{B}(A)$ which is a chain map given by 
$$\Delta_{\mathbb{B}(A)} (a_0 \otimes a_1 \otimes ... \otimes a_{n+1}) = \sum_{m=0}^n (a_0 \otimes ... \otimes a_m \otimes 1) \otimes_A (1 \otimes a_{m+1} \otimes ... \otimes a_{n+1}) $$
for $a_0, ..., a_{n+1} \in A$.  We define the cup product, $\smile$, on the chain level in terms of this diagonal map.  

Let $f \in \Hom_{A^e}((\mathbb{B}(A))_n, A)$ and $g \in \Hom_{A^e}((\mathbb{B}(A))_m, A)$.  The cup product, $f \smile g \in \Hom_{A^e}((\mathbb{B}(A))_{n+m}, A)$, is the composition 
$$f \smile g: \mathbb{B}(A) \xrightarrow{\Delta_{\mathbb{B}(A)}} \mathbb{B}(A) \otimes_A \mathbb{B}(A) \xrightarrow{f \otimes_A g} A \otimes_A A \xrightarrow{\mu} A$$
where $\mu$ is the multiplication map given by $A \otimes_A A \cong A$.

The graded Lie bracket on Hochschild cohomology, $[-,-]$, was originally given by Gerstenhaber in \cite{G} for functions defined at the chain level on the bar resolution.  Here we use a construction given in \cite{NW} to define the Gerstenhaber bracket on any projective resolution, $\mathbb{K}$, of $A$ satisfying the following conditions:  
\begin{cond} \label{cond}
\begin{enumerate}[label=(\alph*)]
\item There is a chain map $\iota: \mathbb{K} \rightarrow \mathbb{B}(A)$ lifting $\mathds{1}_A$.
\item There is a chain map $\pi: \mathbb{B}(A) \rightarrow \mathbb{K}$ such that $\pi \iota = \mathds{1}_{\mathbb{K}}$.
\item $\mathbb{K}$ admits a diagonal map, $\Delta_{\mathbb{K}}: \mathbb{K} \rightarrow \mathbb{K} \otimes_A \mathbb{K}$, such that $\Delta_{\mathbb{B}(A)}\iota = (\iota \otimes_A \iota)\Delta_{\mathbb{K}}$.
\end{enumerate}
\end{cond}
\noindent Notice $\K= \B(A)$ trivially satisfies conditions above.  It is argued in \cite{NW} that if $A$ is a Koszul algebra and $\K$ is its Koszul resolution, then $\K$ satisfies the conditions above.

Let $\mathbf{m}: \mathbb{K} \rightarrow A$ be a quasi-isomorphism then, as in \cite[Section 3.2]{NW}, define $$F_{\mathbb{K}}=(\mathbf{m} \otimes_A \mathds{1}_{\K} - \mathds{1}_{\K} \otimes_A \mathbf{m}): \K \otimes_A \K \rightarrow \K.$$  Assume $\phi: \K \otimes_A \K \rightarrow \K$ satisfies $d_{\Hom(\K \otimes \K, \K)}(\phi) = F_{\K}$ and $f \in \Hom_{A^e}((\K)_n, A)$ and $g \in \Hom_{A^e}((\K)_m, A)$.  Define the $\circ$-product on the chain level as the composition 
\begin{align} \label{circdef} f \circ g : \K \xrightarrow{\Delta_{\K}} \K \otimes_A \K \xrightarrow{\Delta_{\K} \otimes_A \mathds{1}_{\K}} \K \otimes_A \K \otimes_A \K \xrightarrow{\mathds{1}_{\K} \otimes_A g \otimes_A \mathds{1}_{\K}} \K \otimes_A \K \xrightarrow{\phi} \K \xrightarrow{f} A \end{align}
where $\mathds{1}_{\K} \otimes_A g \otimes_A \mathds{1}_{\K}$ has Koszul signs.  That is, for $x_1 \otimes_A x_2 \otimes x_3 \in \K \otimes_A \K \otimes_A \K$ with $x_1$ homogeneous, $$(\mathds{1}_{\K} \otimes_A g \otimes_A \mathds{1}_{\K}) (x_1 \otimes_A x_2 \otimes_A x_3) = (-1)^{m ||x_1||} x_1 \otimes_A g(x_2) \otimes_A x_3$$ where $||x_1||$ is the homological degree of $x_1$.  Finally, define the bracket, $[f,g]$, on the chain level to be $$[f,g]=f \circ g - (-1)^{(n-1)(m-1)} g \circ f.$$  By \cite[Theorem 3.2.5]{NW}, this bracket induces the Gerstenhaber bracket on Hochschild cohomology.

We complete the preliminaries section by defining another crucial concept for this paper, the twisted tensor product.  Let $R$ and $S$ be $k$-algebras, graded by abelian groups $A$ and $B$ respectively.  Let $$t: A \otimes_{\Z} B \rightarrow k^*$$ be a homomorphism of abelian groups which we will call the \textsl{twisting map}.  We will denote $t(a \otimes_{\Z} b)= t^{<a|b>}$ for all $a \in A$ and $b \in B$.  Then, as in \cite{BO}, define $R \otimes^t S$, the \textsl{twisted tensor product} of $R$ and $S$, to be $R \otimes S$ as a vector space with multiplication 
$$(r_0 \otimes s_0)(r_1 \otimes s_1) = t^{<|r_1|||s_0|>} (r_0 r_1 \otimes s_0 s_1)$$
for all homogeneous elements $r_0, r_1 \in R$ and $s_0, s_1 \in S$, where $|\cdot|$ denotes the grading degree of the element.  



\section{Hochschild cohomology of group extensions of quantum complete intersections} \label{HH}


Let $\q=\{q_{i,j}\}_{i,j \in \{1,2,...,n\}}$ such that $q_{i,j} \in k^*$ for all $i,j \in \{1,2, ..., n\}$, $q_{j,i}=q_{i,j}^{-1}$, and $q_{i,i}=-1$.  Define $$\Lambda_{\q} = k \langle x_1, x_2, ..., x_n | x_i x_j = - q_{i,j}x_j x_i, x_i^2=0 \textrm{ for all } i,j \in \{1,2, ... n\} \textrm{ with } i<j\rangle.$$  Notice that this definition agrees with the definition of $\Lambda_{-\q}^{(2,2,...,2)}$ found in \cite{O}.  Let $G$ be a finite group which acts diagonally on the generating set $x_1, x_2, ..., x_n$ of $\Lambda_{\q}$.  For $g \in G$ and $\lambda \in \Lambda_{\q}$, let $^g \lambda$ denote the action of $g$ on $\lambda$.  Then for each $i \in \{1,2,...,n\}$ and $g \in G$, there exists a $\chi_{g,i} \in k$ such that $^g x_i = \chi_{g,i} x_i$.  Extend linearly to induce an action of $G$ on $\Lambda_{\q}$ by automorphisms.  

By restricting to considering only diagonal actions, we impart no additional conditions on $\q$, allowing us to consider group extensions of all quantum complete intersections in the form of $\Lambda_{\q}$.  See \cite[Lemma 4.2]{NW2} for the restrictions on $\q$ under more general actions.  Additionally, because we are assuming that $G$ is a finite group, notice $\chi_{g,i}$ is necessarily a root of unity for all $g \in G$ and $i \in \{1, 2, ..., n\}$.

Define $\Lambda_{\q} \rtimes G$, the \textsl{group extension} of $\Lambda_{\q}$ by $G$ (or \textsl{skew group algebra}), to be $\Lambda_{\q} \otimes kG$ as a vector space with multiplication determined by
$$(\lambda_0 \otimes g_0)(\lambda_1 \otimes g_1) = \lambda_0(^{g_0} \lambda_1) \otimes g_0 g_1$$
for $\lambda_0, \lambda_1 \in \Lambda_{\q}$ and $g_0, g_1 \in G$.  Notice $\Lambda_{\q} \rtimes 1 \cong \Lambda_{\q}$.


Because $\ch k \nmid |G|$, the Hochschild cohomology of $\Lambda_{\q} \rtimes G$ has a particular form, $$\HH^*(\Lambda_{\q} \rtimes G) \cong (\HH^*(\Lambda_{\q}, \Lambda_{\q} \rtimes G))^G,$$ the $G$-invariants of $\HH^*(\Lambda_{\q}, \Lambda_{\q} \rtimes G)$.  The more general result, $\HH^*(A \rtimes G) \cong (\HH^*(A, A \rtimes G))^G,$ for a $k$-algebra $A$ with $\ch k \nmid |G|$ was first given for Hochschild homology in \cite{L} and for the more general setting of Hochschild cohomology of smash product algebras in \cite{S}.  

With this result in mind, we use the remainder of this section to construct a resolution of $\Lambda_{\q}$ using the twisted tensor products of \cite{BO}.  First, notice we can construct $\Lambda$ by taking an iteration of twisted tensor products.  Consider $R_{x_i}=k\langle x_i \rangle/(x_i^2)$ which is $\Z$-graded by the degree of $x_i$ for $i \in \{1,2,...,n\}$.  Let $t_i^{<[j]|1>}=-q_{j,i+1}^{-1}$ for $i \in \{1,2,...,n-1\}$, $j<i$, and $[j]=(0,...,0,1,0,...,0)$, the $i$-tuple with a $1$ in the $j$th coordinate and $0$ otherwise.  Then $$\Lambda_{\q} \cong (...((R_{x_1} \otimes^{t_1} R_{x_2}) \otimes^{t_2} R_{x_3}) \otimes^{t_3} ...) \otimes^{t_{n-1}} R_{x_n}.$$


For each $i \in \{1,2,...,n\}$, $R_{x_i}$ has a projective resolution as an $R_{x_i}^e$-module given by $$\mathbb{K}_{x_i}: ...\xrightarrow{\delta_3} R_{x_i}^e \xrightarrow{\delta_2} R_{x_i}^e \xrightarrow{\delta_1} R_{x_i}^e \xrightarrow{\mu} R_{x_i}  \rightarrow 0$$ \noindent
where $\mu$ is multiplication, and $\delta_m^{x_i}(1 \otimes 1)=x_i \otimes 1 + (-1)^m 1 \otimes x_i$.  The boundary maps $\delta_m$ have degree 1.  Thus to form a graded resolution, we introduce degree shifts to get $$\mathbb{K}_{x_i}': ...\xrightarrow{\delta_3} R_{x_i}^e\langle 2 \rangle \xrightarrow{\delta_2} R_{x_i}^e \langle 1 \rangle \xrightarrow{\delta_1} R_{x_i}^e \xrightarrow{\mu} R_{x_i}  \rightarrow 0$$ where $(R_{x_i}^e \langle a \rangle)_b=(R_{x_i}^e)_{b-a}$, a graded $R_{x_i}^e$-module resolution of $R_{x_i}$.  By \cite[Lemmas 4.3, 4.4, 4.5]{BO}, $$\mathbb{K}=\textrm{Tot}(\mathbb{K}_{x_1}' \otimes^{t_1} \mathbb{K}_{x_2}' \otimes^{t_2} ...\otimes^{t_{n-1}} \mathbb{K}_{x_n}')$$ is a graded projective resolution of $\Lambda_{\textbf{q}}$ as a $(\Lambda_{\textbf{q}})^e$-module.  By \cite[Lemma 4.3]{BO}, we can rearrange the elements of $\K$ using the isomorphism of graded $(R_{x_j} \otimes^{t_j} R_{x_{j+1}})^e$-modules, $R_{x_j}^e \langle i_j \rangle \otimes^{t_j} R_{x_{j+1}}^e \langle i_{j+1} \rangle \cong (R_{x_j} \otimes^{t_j} R_{x_{j+1}})^e \langle i_j, i_{j+1} \rangle$ given by $$(r_1 \otimes r'_1) \otimes^{t_j} (r_2 \otimes r'_2) \mapsto t_j^{-<r'_1|r_2>-<i_j|r_2>-<r'_1|i_{j+1}>}(r_1 \otimes^{t_j} r_2) \otimes (r'_1 \otimes^{t_j} r'_2)$$ for $r_1,r'_1 \in R_{x_j}$ and $r_2, r'_2 \in R_{x_{j+1}}$.


Let $\epsilon_{i_1, i_2, ..., i_n}$ be the copy of $1 \otimes 1$ for which we assign homological degree $i_j$ in $x_j$ for $j \in \{1,2,...,n\}$.  Iterating the isomorphism of \cite[Lemma 4.3]{BO} and changing notation to better track homological degree, we get an isomorphism of graded $(\Lambda_{\q})^e$-modules $$(\mathbb{K})_m \cong \bigoplus_{i_1+i_2+...+i_n=m} \Lambda_{\textbf{q}} \epsilon_{i_1,i_2,...,i_n} \Lambda_{\textbf{q}},$$ by sending $$x_1^{\alpha_1} \otimes x_1^{\alpha'_1} \otimes^{t_1} x_2^{\alpha_2} \otimes x_2^{\alpha'_2} \otimes^{t_2} ... \otimes^{t_{n-1}} x_n^{\alpha_n} \otimes x_n^{\alpha'_n} \in (\mathbb{K}^{x_1})_{i_1} \otimes^{t_1} (\mathbb{K}^{x_2})_{i_2} \otimes^{t_2} ...\otimes^{t_{n-1}} (\mathbb{K}^{x_n})_{i_n}$$ to $$\prod_{l < j} (-q_{l,j})^{\alpha_j \alpha'_l+i_j \alpha'_l + i_l \alpha_j} x_1^{\alpha_1}x_2^{\alpha_2}...x_n^{\alpha_n} \epsilon_{i_1,i_2,...i_n} x_1^{\alpha'_1} x_2^{\alpha'_2} ... x_n^{\alpha_n'} \in \Lambda_{\textbf{q}} \epsilon_{i_1,i_2,...,i_n} \Lambda_{\textbf{q}}.$$   The boundary map becomes \begin{align*} \delta_m(\epsilon_{i_1,i_2,...,i_n})=&\sum_{j=1}^n ( \prod_{l<j} q_{l,j}^{i_l} x_j \epsilon_{i_1, ...i_{j-1},  i_j-1,i_{j+1}, ..., i_n} \\
& \hspace{10 mm}+ (-1)^{\sum_{l \leq j} i_l} \prod_{l >j} (-q_{j,l})^{i_l} \epsilon_{i_1, ...i_{j-1},  i_j-1,i_{j+1}, ..., i_n} x_j ).
\end{align*}  By an abuse of notation, we will refer to this identified complex as $\K$ with boundary maps $\delta_m$ for the remainder of the paper.


Now that we have a resolution, apply $\Hom_{(\Lambda_{\q})^e}( -, \Lambda_{\q} \rtimes G)$.  Homomorphisms $\eta \in \textrm{Hom}_{(\Lambda_{\q})^e}((\mathbb{K})_m, \Lambda_{\q} \rtimes G)$, are entirely determined by the image, $\eta(\epsilon_{i_1, i_2, ..., i_n})$, for $i_1+i_2+... + i_n = m$.  Denote $\epsilon_{i_1, i_2, ..., i_n}^*$ the function in $\textrm{Hom}_{(\Lambda_{\q})^e}(\mathbb{K}, \Lambda_{\q} \rtimes G)$ defined by $\epsilon_{i_1, i_2, ..., i_n}^*(\epsilon_{j_1, j_2, ..., j_n})=\delta_{i_1, j_1} \delta_{i_2, j_2} ... \delta_{i_n, j_n} \otimes 1$ where $\delta_{i_m, j_m}$ is the Kronecker delta.  Then any $\eta \in \textrm{Hom}_{(\Lambda_{\q})^e}((\mathbb{K})_m, \Lambda_{\q} \rtimes G)$ can be written $$\eta = \sum_{g \in G} \sum_{i_1+i_2+...+i_n=m} (\lambda_{i_1, i_2, ..., i_n}^g \otimes g)\epsilon_{i_1, i_2, ..., i_n}^*$$ where $\lambda_{i_1, i_2, ..., i_n}^g \in \Lambda_{\q}$ depends on $i_1, i_2, ..., i_n$ and $g$.  Moreover, $$\textrm{Hom}_{(\Lambda_{\q})^e}(\mathbb{K}, \Lambda_{\q} \rtimes G) \cong \bigoplus_{g \in G} \textrm{Hom}_{(\Lambda_{\q})^e}(\mathbb{K}, \Lambda_{\q} \otimes g)$$ therefore we can restrict to only looking at homomorphisms in $\textrm{Hom}_{(\Lambda_{\q})^e}(\mathbb{K}, \Lambda_{\q} \otimes g)$ for each $g \in G$.

Before we move farther, we will need some additional notational conveniences, as in \cite{W}.  Elements of $\Lambda_{\q}$ are linear combinations of the monomials $ x_1^{\alpha_1}x_2^{\alpha_2}...x_n^{\alpha_n}$ for $\alpha_i \in \{0,1\}$.  Thus, for $\alpha \in \{0,1\}^n$, denote $x^{\alpha} = x_1^{\alpha_1} x_2^{\alpha_2}...x_n^{\alpha_n}$.  And similarly, for $\beta \in \mathbb{N}^n$, define $\epsilon_{\beta}= \epsilon_{\beta_1, \beta_2, ..., \beta_n}$ and $\epsilon_{\beta}^*= \epsilon_{\beta_1, \beta_2, ..., \beta_n}^*$.  Denote $|\beta|=\beta_1+\beta_2+...+\beta_n$.  Lastly, as in the twisting maps in the construction of $\Lambda_{\q}$, denote $[i]=(0,...,0,1,0,...,0)$, the $n$-tuple with $1$ in the $i$th coordinate and $0$ otherwise.

Using this notation, the induced boundary map on $\Hom_{(\Lambda_{\q})^e}(\mathbb{K}, \Lambda_{\q} \otimes g)$ becomes 
\begin{align*} \delta^m((x^{\alpha} \otimes g) \epsilon_{\beta}^*) =& \sum_{l=1}^n ( \prod_{k <l} q_{k,l}^{\beta_k}x_l(x^{\alpha} \otimes g) \epsilon_{\beta+[l]}^* \\
& \hspace{10 mm}- (-1)^{\sum_{k \leq l} \beta_k} \prod_{k >l} (-q_{l,k})^{\beta_k} (x^{\alpha} \otimes g)x_l \epsilon_{\beta+[l]}^*) \\
=&  \sum_{l=1}^n ( \prod_{k <l} q_{k,l}^{\beta_k}(-q_{k,l})^{-\alpha_k}(x^{\alpha+[l]} \otimes g) \epsilon_{\beta+[l]}^* \\
&\hspace{10 mm} - (-1)^{\sum_{k \leq l} \beta_k} \prod_{k >l} (-q_{l,k})^{\beta_k}(-q_{l,k})^{-\alpha_k} \chi_{g,l}(x^{\alpha+[l]} \otimes g) \epsilon_{\beta+[l]}^*) \\
=&  \sum_{l=1}^n ( \prod_{k <l} (-1)^{\beta_k}(-q_{k,l})^{\beta_k-\alpha_k} - (-1)^{\sum_{k \leq l} \beta_k} \prod_{k >l} (-q_{l,k})^{\beta_k-\alpha_k}\chi_{g,l})\\
& \hspace{20 mm}(x^{\alpha+[l]} \otimes g) \epsilon_{\beta+[l]}^* \\
=&  \sum_{l=1}^n (-1)^{\sum_{k < l} \beta_k} ( \prod_{k <l} (-q_{k,l})^{\beta_k-\alpha_k} - (-1)^{\beta_l} \prod_{k >l} (-q_{l,k})^{\beta_k-\alpha_k}\chi_{g,l})\\
& \hspace{20 mm} (x^{\alpha+[l]} \otimes g) \epsilon_{\beta+[l]}^* \\
\end{align*} for $\beta \in \mathbb{N}^n$ with $|\beta|=m-1$ and $\alpha \in \{0,1\}^n$.  If $\alpha_l=1$ or $(-1)^{\beta_l}\prod_{k \neq l} (-q_{k,l})^{\beta_k-\alpha_k} = \chi_{g,l}$, the coefficient of $\epsilon_{\beta +[l]}^*$ in  $\delta^m((x^{\alpha} \otimes g) \epsilon_{\beta}^*)$ is 0.  Thus, for $l \in \{1,2, ... ,n\}$, define $$\Omega_g(\alpha, \beta, l) = \begin{cases} 0 & \textrm{ if }\alpha_l=1 \\
0 & \textrm{ if }(-1)^{\beta_l}\prod_{k \neq l} (-q_{k,l})^{\beta_k-\alpha_k} = \chi_{g,l} \\
 (-1)^{\sum_{k < l} \beta_k} ( \prod_{k <l} (-q_{k,l})^{\beta_k-\alpha_k} &\\
\hspace{10 mm}- (-1)^{\beta_l} \prod_{k >l} (-q_{l,k})^{\alpha_k-\beta_k}\chi_{g,l}) & \textrm{ otherwise.} \\
\end{cases}$$ 
In this notation, for $\beta \in \mathbb{N}^n$ with $|\beta|=m-1$ and $\alpha \in \{0,1\}^n$, \begin{align} \label{delta} \delta^m((x^{\alpha} \otimes g) \epsilon_{\beta}^*) = \sum_{l=1}^n \Omega_g(\alpha, \beta, l)(x^{\alpha+[l]} \otimes g)\epsilon_{\beta+[l]}^*.\end{align}


\subsection{The vector space} \label{vs}

To compute Hochschild cohomology, we decompose the complex $\Hom_{(\Lambda_{\q})^e}(\K, \Lambda_{\q} \otimes g)$ into subcomplexes, adapting a method of \cite[Section 6]{W} and \cite{NSW}.  For each $m \in \N$, $g \in G$, and $\gamma \in (\N \cup \{-1\})^n$, consider the set, $$K_{g, \gamma}^m = span_k \{(x^{\alpha} \otimes g)\epsilon_{\beta}^* | \alpha \in \{0,1\}^n, \beta \in \mathbb{N}^n, |\beta|=m, \textrm{ and } \beta-\alpha=\gamma\},$$ of all elements in homological degree $m$ with fixed $\beta-\alpha$.    Because the difference $\beta - \alpha$ is unchanged by the boundary map $\delta_m$ given in (\ref{delta}), $K_{g,\gamma}=\bigoplus_{m \in \mathbb{N}} K_{g, \gamma}^m$ is a subcomplex of $\Hom_{(\Lambda)^e}(\mathbb{K}, \Lambda \otimes g)$.  

We will show that for some $g \in G$ and $\gamma \in (\N \cup \{-1\})^n$, the subcomplexes $K_{g, \gamma}$ are acyclic and, for others, the differentials are 0.  The condition of the subcomplexes is determined by the set, $$C_g = \{\gamma \in (\mathbb{N} \cup \{-1\})^n | \forall~ l, \gamma_l=-1 \textrm{ or } (-1)^{\gamma_l}\prod_{k \neq l} (-q_{k,l})^{\gamma_k} = \chi_{g,l} \},$$ of $\gamma$ which satisfy a relation between the quantum coefficients and the diagonal action by $g$.

\begin{lemma} \label{acyclic}
Let $\gamma \in (\N \cup \{-1\})^n - C_g$ for some $g \in G$.  Then $K_{g, \gamma}$ is acyclic.
\end{lemma}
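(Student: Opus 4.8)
The plan is to exhibit an explicit contracting homotopy on $K_{g,\gamma}$, built from a single distinguished coordinate direction singled out by the hypothesis $\gamma \notin C_g$. First I would unwind the structure of the subcomplex. Writing $\beta = \gamma + \alpha$, a basis element $(x^\alpha \otimes g)\epsilon_\beta^*$ of $K_{g,\gamma}$ is determined by its $\alpha \in \{0,1\}^n$, subject only to $\beta \in \N^n$; since $\gamma_l \in \N \cup \{-1\}$, this forces $\alpha_l = 1$ in every coordinate with $\gamma_l = -1$ and leaves $\alpha_l \in \{0,1\}$ free in the coordinates $l$ with $\gamma_l \geq 0$. Thus $K_{g,\gamma}$ is a cube of one-dimensional spaces indexed by such admissible $\alpha$, in homological degree $|\gamma| + |\alpha|$, and by (\ref{delta}) the differential sends $e_\alpha := (x^\alpha \otimes g)\epsilon_{\gamma+\alpha}^*$ to $\sum_l c_l(\alpha)\, e_{\alpha + [l]}$, where $c_l(\alpha) := \Omega_g(\alpha, \gamma+\alpha, l)$ and the sum effectively runs over the coordinates $l$ with $\alpha_l = 0$, the first clause of $\Omega_g$ killing the rest.

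The key observation I would isolate is that, for a coordinate $l$ with $\alpha_l = 0$, we have $\beta_l = \gamma_l$ and $\beta_k - \alpha_k = \gamma_k$, so the vanishing clause of $\Omega_g$ reads $(-1)^{\gamma_l}\prod_{k\neq l}(-q_{k,l})^{\gamma_k} = \chi_{g,l}$, a condition on $\gamma$ alone and \emph{not} on the remaining entries of $\alpha$. Because $\gamma \notin C_g$, there is a coordinate $l_0$ with $\gamma_{l_0} \neq -1$ (so that $\alpha_{l_0} = 0$ is permitted) for which this condition fails; hence $c_{l_0}(\alpha)$ is a \emph{nonzero} scalar for every admissible $\alpha$ with $\alpha_{l_0} = 0$. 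This uniform invertibility in the $l_0$-direction is exactly what powers the homotopy.

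I would then define a degree $-1$ map $s : K_{g,\gamma} \to K_{g,\gamma}$ by $s\, e_\alpha = c_{l_0}(\alpha - [l_0])^{-1}\, e_{\alpha - [l_0]}$ when $\alpha_{l_0} = 1$ and $s\, e_\alpha = 0$ when $\alpha_{l_0} = 0$, which is well defined by the previous step. A direct computation of $s\delta + \delta s$ on $e_\alpha$ produces the term $e_\alpha$ from the $l_0$-strand (for $\alpha_{l_0}=0$ it comes from $s\delta$, for $\alpha_{l_0}=1$ from $\delta s$), together with off-diagonal contributions proportional to $e_{\alpha + [l] - [l_0]}$ for $l \neq l_0$. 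Proving $s\delta + \delta s = \id$ therefore reduces to the cancellation $\frac{c_l(\alpha-[l_0])}{c_{l_0}(\alpha-[l_0])} + \frac{c_l(\alpha)}{c_{l_0}(\alpha + [l] - [l_0])} = 0$ for each $l \neq l_0$.

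The main obstacle, and the only nontrivial point, is this cancellation; I expect to derive it not by manipulating the explicit coefficients but from the identity $\delta^2 = 0$, which holds because $\delta$ comes from a genuine resolution. Indeed, comparing the coefficient of $e_{\alpha+[l]+[l']}$ in $\delta^2 e_\alpha$ gives $c_l(\alpha)c_{l'}(\alpha+[l]) + c_{l'}(\alpha)c_l(\alpha+[l']) = 0$ for $l \neq l'$; specializing $l' = l_0$ and shifting $\alpha \mapsto \alpha - [l_0]$ yields precisely the required relation after dividing by the nonzero scalars $c_{l_0}(\alpha-[l_0])$ and $c_{l_0}(\alpha+[l]-[l_0])$, both of which have $l_0$-coordinate $0$ and so are guaranteed invertible by the choice of $l_0$. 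With the homotopy identity established, $K_{g,\gamma}$ is contractible and hence acyclic. I would only need to take routine care that all index shifts keep $\alpha$ admissible and that the scalars being inverted are exactly the ones made nonzero by $\gamma \notin C_g$.
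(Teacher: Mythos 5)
Your proof is correct, and while it shares the paper's overall strategy (an explicit contracting homotopy on the cube $K_{g,\gamma}$), the implementation is genuinely different. The paper, adapting Wambst, uses a \emph{symmetrized} homotopy: a sum over all coordinates $l$ weighted by the inverses $\omega_g(\alpha,\beta,l)$ and normalized by $1/||\beta-\alpha||_g$; verifying $h\delta+\delta h=\id$ then splits into a diagonal identity (the surviving terms must sum to the integer $||\gamma||_g$, which is exactly why the paper needs to divide by it, and where $\ch k = 0$ is invoked) and an off-diagonal cancellation checked by direct manipulation of the $\Omega_g$, $\omega_g$ expressions. You instead contract along a \emph{single} direction $l_0$ witnessing $\gamma\notin C_g$, after the key observation that for $\alpha_{l_0}=0$ the vanishing clause of $\Omega_g(\alpha,\gamma+\alpha,l_0)$ is a condition on $\gamma$ alone, so $c_{l_0}(\alpha)$ is uniformly nonzero across the whole cube; and you obtain the off-diagonal cancellation
$$\frac{c_l(\alpha-[l_0])}{c_{l_0}(\alpha-[l_0])}+\frac{c_l(\alpha)}{c_{l_0}(\alpha+[l]-[l_0])}=0$$
not by computation but from $\delta^2=0$ applied to the coefficient of $e_{\alpha'+[l]+[l_0]}$ with $\alpha'=\alpha-[l_0]$, dividing by the two guaranteed-nonzero scalars. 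This buys two things: your argument inverts only elements of $k^*$, never an integer, so this particular lemma becomes characteristic-free (the paper's own proof forces $\ch k=0$, or at least $\ch k>n$; of course characteristic restrictions are still used elsewhere in the paper, e.g.\ for passing to $G$-invariants); and deriving the cancellation from $\delta^2=0$ is cleaner and less error-prone than juggling the explicit quantum coefficients. What the paper's version buys in exchange is only that it requires no choice of distinguished coordinate and runs parallel to Wambst's original contraction. The steps you defer are indeed routine: $\gamma_{l_0}\neq-1$ ensures $\alpha\pm[l_0]$ stays admissible, and every scalar you invert is of the form $c_{l_0}(\alpha')$ with $\alpha'_{l_0}=0$.
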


\begin{proof} 
To show this result, we adapt the proof of \cite[Theorem 6.1]{W} and \cite[Lemma 4.6]{NSW} to this setting.  That is, we will show this result by demonstrating a contracting homotopy but first we will need a bit more notation.  For $\gamma \in (\mathbb{N} \cup \{-1\})^n$ and $g \in G$, define $$||\gamma||_g = \# \{l \in \{1, 2, ..., n\} |  \gamma_l \neq -1 \textrm{ and } (-1)^{\gamma_l}\prod_{k \neq l} (-q_{k,l})^{\gamma_k} \neq \chi_{g,l} \}.$$  Notice for $\gamma \in (\mathbb{N} \cup \{-1\})^n-C_g$, $||\gamma||_g \neq 0$ by construction.  

Now, let $m$ and $\gamma \in (\mathbb{N} \cup \{-1\})^n - C_g$ be fixed, and define $h_m: K_{g, \gamma}^m \rightarrow K_{g, \gamma}^{m-1}$ by $$h_m((x^{\alpha} \otimes g) \epsilon_{\beta}^*)= \frac{1}{||\beta-\alpha||_g} \sum_{l=1}^n \omega_g(\alpha, \beta, l)(x^{\alpha-[l]} \otimes g) \epsilon_{\beta-[l]}$$ for $\beta \in \mathbb{N}^n$ with $|\beta|=m$, $\alpha \in \{0,1\}^n$ such that $\beta-\alpha=\gamma$ and $$\omega_g(\alpha, \beta, l) = \begin{cases} 0 & \textrm{ if } \alpha_l=0 \textrm{ or } \beta_l=0 \\
0 & \textrm{ if } \prod_{k \neq l} (-q_{k,l})^{\beta_k-\alpha_k}(-1)^{\beta_l} = -\chi_{g,l} \\
\Omega_g(\alpha-[l],\beta-[l],l)^{-1} & \textrm{ otherwise.} \\ \end{cases}.$$  By defining this contracting homotopy in this way, we require the $\ch k = 0$ condition.

We need to show that $h_{m+1}\delta^{m+1} + \delta^m h_m = \mathds{1} \big|_{K_{g,\gamma}^m}$.  Let $\alpha \in \{0,1\}^n$, $\beta \in \mathbb{N}^n$ with $|\beta|=m$ such that $\beta-\alpha=\gamma$, then \begin{align*} (h_{m+1}\delta^{m+1} +& \delta^m h_m) ((x^{\alpha} \otimes g) \epsilon_{\beta}^*) \\
=& \frac{1}{||\beta-\alpha||_g} \sum_{j=1}^n \sum_{l=1}^n (\Omega_g(\alpha, \beta, l)\omega_g(\alpha+[l], \beta+[l], j) \\
&\hspace{25 mm}+ \Omega_g(\alpha-[j], \beta-[j], l) \omega_g(\alpha, \beta, j))(x^{\alpha+[l]-[j]} \otimes g) \epsilon_{\beta+[l]-[j]}^*\\
=& \frac{1}{||\beta-\alpha||_g} \sum_{l=1}^n (\Omega_g(\alpha, \beta, l)\omega_g(\alpha+[l], \beta+[l], l) \\
&\hspace{25 mm}+\Omega_g(\alpha-[l], \beta-[l], l) \omega_g(\alpha, \beta, l))(x^{\alpha} \otimes g) \epsilon_{\beta}^* \\
&+\frac{1}{||\beta-\alpha||_g} \sum_{j \neq l} (\Omega_g(\alpha, \beta, l)\omega_g(\alpha+[l], \beta+[l], j)\\
&\hspace{25 mm}+\Omega_g(\alpha-[j], \beta-[j], l) \omega_g(\alpha, \beta, j))(x^{\alpha+[l]-[j]} \otimes g) \epsilon_{\beta+[l]-[j]}^*.
\end{align*}

It remains to show that 
\begin{align*}
\frac{1}{||\beta-\alpha||_g} \sum_{l=1}^n (\Omega_g(\alpha, \beta, l)\omega_g(\alpha+[l], \beta+[l], l) &+\Omega_g(\alpha-[l], \beta-[l], l) \omega_g(\alpha, \beta, l))(x^{\alpha} \otimes g) \epsilon_{\beta}^* \\
& = (x^{\alpha} \otimes g) \epsilon_{\beta}^* 
\end{align*} and \begin{align*} 
\frac{1}{||\beta-\alpha||_g} \sum_{j \neq l} (\Omega_g(\alpha, \beta, l)&\omega_g(\alpha+[l], \beta+[l], j) \\
&+\Omega_g(\alpha-[j], \beta-[j], l) \omega_g(\alpha, \beta, j))(x^{\alpha+[l]-[j]} \otimes g) \epsilon_{\beta+[l]-[j]}^* \\
& \hspace{36 mm} = 0. 
\end{align*}  

Let's start by showing the second condition.  Assume $j \neq l$.  Notice, because $(\alpha \pm [r])_s=\alpha_s$ and $(\beta \pm [r])_s = \beta_s$ for $r \neq s$, $\Omega_g(\alpha, \beta, l)\omega_g(\alpha+[l], \beta+[l], j)$ and $\Omega_g(\alpha-[j], \beta-[j], l) \omega_g(\alpha, \beta, j)$ are both either simultaneously $0$ or nonzero.  If they are $0$, we have nothing to prove.  Therefore assume these terms are nonzero.  If $j<l$, then 
\begin{align*}\Omega_g(\alpha, \beta, l)\omega_g(\alpha+[l]&, \beta+[l], j)+\Omega_g(\alpha-[j], \beta-[j], l) \omega_g(\alpha, \beta, j) \\
=& (-1)^{\sum_{k<l} \beta_k}(\prod_{k<l} (-q_{k,l})^{\beta_k-\alpha_k}-(-1)^{\beta_l}\prod_{k>l}(-q_{l,k})^{-\beta_k+\alpha_k} \chi_{g,l}) \\
&\hspace{10 mm} (-1)^{\sum_{k<j} \beta_k}(\prod_{k<j}(-q_{k,j})^{\beta_k-\alpha_k} + (-1)^{\beta_j} \prod_{k>j} (-q_{j,k})^{-\beta_k+\alpha_k} \chi_{g,j})^{-1}\\
& + (-1)^{\sum_{k<l} \beta_k}(-\prod_{k <l} (-q_{k,l})^{\beta_k-\alpha_k}+(-1)^{\beta_l}\prod_{k>l}(-q_{l,k})^{-\beta_k+\alpha_k} \chi_{g,l})\\
&\hspace{10 mm} (-1)^{\sum_{k<j} \beta_k}(\prod_{k<j}(-q_{k,j})^{\beta_k-\alpha_k} + (-1)^{\beta_j} \prod_{k>j} (-q_{j,k})^{-\beta_k+\alpha_k} \chi_{g,j})^{-1}\\
=& 0.
\end{align*}
We get a similar computation if $j>l$.  Thus for every $j, l \in \{1, 2, ... ,n\}$ with $j \neq l$, $$\Omega_g(\alpha, \beta, l)\omega_g(\alpha+[l], \beta+[l], j)+\Omega_g(\alpha-[j], \beta-[j], l) \omega_g(\alpha, \beta, j)=0,$$ giving us our desired expression.

Now we will show \begin{align*}
\frac{1}{||\beta-\alpha||_g} \sum_{l=1}^n (\Omega_g(\alpha, \beta, l)\omega_g(\alpha+[l], \beta+[l], l) &+\Omega_g(\alpha-[l], \beta-[l], l) \omega_g(\alpha, \beta, l))(x^{\alpha} \otimes g) \epsilon_{\beta}^* \\
& = (x^{\alpha} \otimes g) \epsilon_{\beta}^* 
\end{align*} by showing that $$\sum_{l=1}^n (\Omega_g(\alpha, \beta, l)\omega_g(\alpha+[l], \beta+[l], l) +\Omega_g(\alpha-[l], \beta-[l], l) \omega_g(\alpha, \beta, l))=||\beta-\alpha||_g.$$  

Recall, we assumed $\gamma \in (\mathbb{N} \cup \{-1\})^n - C_g$.  Thus for some $l \in \{1,2,...,n\}$, $\beta-\alpha=\gamma \in (\mathbb{N} \cup \{-1\})^n - C_g$ has $\beta_l-\alpha_l \neq -1$ and $(-1)^{\beta_l-\alpha_l}\prod_{k \neq l} (-q_{k,l})^{\beta_k-\alpha_k} \neq \chi_{g,l}$.  The reader can check that $\Omega_g(\alpha, \beta, l)\omega_g(\alpha+[l], \beta+[l], l) +\Omega_g(\alpha-[l], \beta-[l], l) \omega_g(\alpha, \beta, l)=1$ for each $l \in \{1,2,...,n\}$ such that $\beta_l-\alpha_l \neq -1$ and $(-1)^{\beta_l-\alpha_l}\prod_{k \neq l} (-q_{k,l})^{\beta_k-\alpha_k} \neq \chi_{g,l}$.  On the other hand, if $l \in \{1,2,...,n\}$ satisfies $\beta_l-\alpha_l = -1$ or \newline $(-1)^{\beta_l-\alpha_l}\prod_{k \neq l} (-q_{k,l})^{\beta_k-\alpha_k} = \chi_{g,l}$, then $\Omega_g(\alpha, \beta, l)\omega_g(\alpha+[l], \beta+[l], l) +\Omega_g(\alpha-[l], \beta-[l], l) \omega_g(\alpha, \beta, l)=0$.  Combined these give us our final result.

\end{proof}

\begin{thm} \label{vsthm}  For each $g$ in $G$, 
$$\HH^m(\Lambda_{\q}, \Lambda_{\q} \otimes g) \cong \bigoplus_{\substack{\beta \in \mathbb{N}^n \\ |\beta|=m}} \bigoplus_{\substack{\alpha \in \{0,1\}^n \\ \beta-\alpha \in C_g}} span_k \{(x^{\alpha} \otimes g) \epsilon_{\beta}^*\}.$$  Therefore $\HH^m(\Lambda \rtimes G)$ is the $G$-invariant subspace of $$\bigoplus_{g \in G} \bigoplus_{\substack{\beta \in \mathbb{N}^n \\ |\beta|=m}} \bigoplus_{\substack{\alpha \in \{0,1\}^n \\ \beta-\alpha \in C_g}} span_k \{(x^{\alpha} \otimes g) \epsilon_{\beta}^*\}.$$
\end{thm}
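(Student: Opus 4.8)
The plan is to compute each $\HH^m(\Lambda_{\q}, \Lambda_{\q} \otimes g)$ as the cohomology of the complex $\Hom_{(\Lambda_{\q})^e}(\K, \Lambda_{\q} \otimes g)$ and then assemble the pieces over $G$. Since the boundary map in (\ref{delta}) fixes the difference $\gamma = \beta - \alpha$, this complex decomposes as the direct sum $\bigoplus_{\gamma \in (\N \cup \{-1\})^n} K_{g,\gamma}$ of the subcomplexes introduced above. Because cohomology commutes with direct sums, $\HH^m(\Lambda_{\q}, \Lambda_{\q}\otimes g) \cong \bigoplus_{\gamma} H^m(K_{g,\gamma})$, and Lemma \ref{acyclic} immediately annihilates every summand with $\gamma \notin C_g$. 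It therefore remains only to compute $H^m(K_{g,\gamma})$ for $\gamma \in C_g$.

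For those $\gamma$ I would show that every differential of $K_{g,\gamma}$ vanishes identically, so that $H^m(K_{g,\gamma}) = K_{g,\gamma}^m$. By (\ref{delta}) the differential of a generator $(x^{\alpha} \otimes g)\epsilon_{\beta}^*$ with $\beta - \alpha = \gamma$ is $\sum_{l} \Omega_g(\alpha,\beta,l)(x^{\alpha+[l]}\otimes g)\epsilon_{\beta+[l]}^*$, so it suffices to verify $\Omega_g(\alpha,\beta,l) = 0$ for all $l$ whenever $\beta - \alpha \in C_g$. The hard part will be matching the two defining clauses of $C_g$ against the two vanishing clauses of $\Omega_g$: if $\gamma_l = -1$ then necessarily $\alpha_l = 1$, so the first clause of $\Omega_g$ applies; while if $(-1)^{\gamma_l}\prod_{k\neq l}(-q_{k,l})^{\gamma_k} = \chi_{g,l}$ one must split on the value of $\alpha_l$. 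The subtlety is that the exponent $(-1)^{\gamma_l} = (-1)^{\beta_l - \alpha_l}$ in the definition of $C_g$ differs from the exponent $(-1)^{\beta_l}$ appearing in the second clause of $\Omega_g$; these agree precisely on the relevant range $\alpha_l = 0$, whereas for $\alpha_l = 1$ the first clause of $\Omega_g$ already forces the value to be $0$. After this case check each $\Omega_g(\alpha,\beta,l)$ vanishes, and since this holds in every homological degree, both the outgoing and incoming differentials of $K_{g,\gamma}$ are zero.

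With all differentials zero on $K_{g,\gamma}$ for $\gamma \in C_g$, I would conclude $\HH^m(\Lambda_{\q}, \Lambda_{\q}\otimes g) \cong \bigoplus_{\gamma \in C_g} K_{g,\gamma}^m$. Re-indexing the outer sum over $\gamma \in C_g$ as a sum over the pairs $(\alpha, \beta)$ with $|\beta| = m$ and $\beta - \alpha = \gamma$ yields exactly the double direct sum over $\beta \in \N^n$ with $|\beta| = m$ and $\alpha \in \{0,1\}^n$ with $\beta - \alpha \in C_g$, which is the first claimed formula.

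Finally, for the statement about $\Lambda_{\q} \rtimes G$ I would invoke the isomorphism $\HH^*(\Lambda_{\q} \rtimes G) \cong (\HH^*(\Lambda_{\q}, \Lambda_{\q}\rtimes G))^G$ recorded earlier, together with the decomposition $\HH^*(\Lambda_{\q}, \Lambda_{\q}\rtimes G) \cong \bigoplus_{g \in G}\HH^*(\Lambda_{\q}, \Lambda_{\q}\otimes g)$ coming from the splitting $\Hom_{(\Lambda_{\q})^e}(\K, \Lambda_{\q}\rtimes G) \cong \bigoplus_{g} \Hom_{(\Lambda_{\q})^e}(\K, \Lambda_{\q}\otimes g)$. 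Substituting the first formula into each summand and then passing to $G$-invariants gives the second assertion directly.
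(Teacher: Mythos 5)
Your proposal is correct and follows essentially the same route as the paper: decompose $\Hom_{(\Lambda_{\q})^e}(\K, \Lambda_{\q}\otimes g)$ into the subcomplexes $K_{g,\gamma}$, apply Lemma \ref{acyclic} to discard $\gamma \notin C_g$, and show by the same case analysis on $\gamma_l = -1$ versus the quantum-coefficient condition (splitting on $\alpha_l$, where $\alpha_l = 0$ forces $\beta_l = \gamma_l$ so the two exponent conventions agree) that all differentials vanish on $K_{g,\gamma}$ for $\gamma \in C_g$. The concluding passage to $\HH^*(\Lambda_{\q}\rtimes G)$ via $G$-invariants and the direct sum over $g \in G$ is likewise exactly how the paper assembles the second assertion.
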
 

\begin{proof}

We start by showing that for $\gamma \in C_g$, $\delta^m \big|_{K_{g,\gamma}^m} = 0$.  If $\gamma \in C_g$, then for all $l \in \{1,2,...,n\}$, $\gamma_l=-1$ or $(-1)^{\gamma_l}\prod_{k \neq l} (-q_{k,l})^{\gamma_k} = \chi_{g,l}$. 

\textbf{Case 1}: If $\gamma_l=-1$, then, because $\gamma_l=-1$ if and only if $\alpha_l=1$ and $\beta_l=0$, we have $\Omega_g(\alpha, \beta, l)=0$.  

\textbf{Case 2}: If $ (-1)^{\gamma_l}\prod_{k \neq l} (-q_{k,l})^{\gamma_k} = \chi_{g,l}$, we have two cases to consider, $\alpha_l = 0$ or $\alpha_l=1$.  If additionally $\alpha_l=1$, then $\Omega_g(\alpha, \beta, l)=0$.  In this case, the $ (-1)^{\gamma_l}\prod_{k \neq l} (-q_{k,l})^{\gamma_k} = \chi_{g,l}$ condition is unnecessary.  If $ (-1)^{\gamma_l}\prod_{k \neq l} (-q_{k,l})^{\gamma_k} = \chi_{g,l}$ and $\alpha_l=0$, then $ (-1)^{\beta_l}\prod_{k \neq l} (-q_{k,l})^{\gamma_k} =  (-1)^{\gamma_l}\prod_{k \neq l} (-q_{k,l})^{\gamma_k} = \chi_{g,l}$ and therefore $\Omega_g(\alpha, \beta, l)=0$ in this case as well.  

Thus for $\gamma \in C_g$, $\delta^m \big|_{K_{g,\gamma}^m} = 0$, making the cohomology of the subcomplexes $K_{g,\gamma}^m$ for $\gamma \in C_g$ equal to $span_k\{(x^{\alpha} \otimes g)\epsilon_{\beta}^* | \beta-\alpha=\gamma\}$.  By Lemma \ref{acyclic}, the subcomplexes $K_{g, \gamma}^m$ for $\gamma \notin C_g$ are acyclic.

\end{proof}


\subsection{The cup product} \label{cup}



We define the cup product on Hochschild cohomology as a composition, for $f \in \Hom_{(\Lambda_q)^e}(\mathbb{B}(\Lambda_{\q})_l, \Lambda_{\q} \rtimes G)$ and $g \in \Hom_{(\Lambda)^e}(\mathbb{B}(\Lambda_{\q})_m, \Lambda_{\q} \rtimes G)$ $$f \smile g: \mathbb{B}(\Lambda_{\q}) \xrightarrow{\Delta_{\mathbb{B}(\Lambda_{\q})}} \mathbb{B}(\Lambda_{\q}) \otimes_{\Lambda_{\q}} \mathbb{B}(\Lambda_{\q}) \xrightarrow{f \otimes_{\Lambda_{\q}} g} \Lambda_{\q} \rtimes G \otimes_{\Lambda_{\q}} \Lambda_{\q} \rtimes G \xrightarrow{\mu} \Lambda_{\q} \rtimes G$$ where $\mu$ is the multiplication map.  To make use of this description, we will define yet another resolution of $\Lambda_{\q}$,  $\mathbb{P}$, which is a subcomplex of $\mathbb{B}(\Lambda)$ such that the diagonal $\Delta_{\mathbb{B}(\Lambda)}$ induces a comultiplication on $\mathbb{P}$.


We begin by defining an $n-$dimensional analog of the $f_i^m$ defined in \cite{BGMS}.  Let $f_{(0,0,...,0,0)} =1, f_{[l]}=x_l$ for all $l \in \{1,2,...,n\}$, and $f_{\beta}=0$ for any $\beta \in \Z^n$ with $\beta_l < 0$ for some $l \in \{1,2,..., n\}$.  Then for $\beta \in \N^n$, define $$f_{\beta} = \sum_{l=1}^n \prod_{k>l} q_{l,k}^{\beta_k} f_{\beta-[l]} \otimes x_l.$$

That is, for $\beta \in \mathbb{N}^n$, $f_{\beta}$ is a linear combination of all tensor products of length $|\beta|$ with $\beta_i$ $x_i$'s for each $i \in \{1,2, ...,n\}$ and coefficient $\mathbf{q}^\alpha$ determined by the commuting coefficients that appear when moving the generators past each other starting from the configuration with generators in increasing order.  For example, 
$$f_{(0,2,1,0,...,0)} = x_2 \otimes x_2 \otimes x_3 + q_{2,3} x_2 \otimes x_3 \otimes x_2 + q_{2,3}^2 x_3 \otimes x_2 \otimes x_2.$$


As in \cite{BGMS}, $\tilde{f_{\beta}}=1 \otimes f_{\beta} \otimes 1$.  Consider the $\Lambda_{\q}^e$-module resolution $$\mathbb{P}: ... \xrightarrow{d_{\mathbb{P}}^{m+1}} \bigoplus_{\substack{\beta \in \mathbb{N}^n \\ |\beta|=m}} \Lambda_{\mathbf{q}} \otimes f_{\beta} \otimes \Lambda_{\mathbf{q}} \xrightarrow{d_{\mathbb{P}}^{m}} \bigoplus_{\substack{\beta \in \mathbb{N}^n \\ |\beta|=m-1}} \Lambda_{\mathbf{q}} \otimes f_{\beta} \otimes \Lambda_{\mathbf{q}} \xrightarrow{d_{\mathbb{P}}^{m-1}} ...$$  where, for $\beta \in \N^n$ with $|\beta|=m$, $$d_{\mathbb{P}}(\tilde{f_{\beta}})= \sum_{j=1}^n ( \prod_{l<j} q_{l,j}^{i_l} x_j \tilde{f}_{\beta-[j]} + (-1)^m \prod_{l >j} q_{j,l}^{i_l} \tilde{f}_{\beta-[j]} x_j ).$$  

\begin{lemma}
$\mathbb{P}$ is a subcomplex of $\mathbb{B}$.
\end{lemma}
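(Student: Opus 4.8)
The plan is to exhibit $\mathbb{P}$ as a genuine $\Lambda_{\q}^e$-submodule of the bar resolution $\mathbb{B}=\mathbb{B}(\Lambda_{\q})$ and then to check that the bar differential restricts to $d_{\mathbb{P}}$. Concretely, for $|\beta|=m$ the element $f_{\beta}$ lies in $\Lambda_{\q}^{\otimes m}$, so $\tilde f_{\beta}=1\otimes f_{\beta}\otimes 1$ lies in $\mathbb{B}_m=\Lambda_{\q}^{\otimes(m+2)}$, and I regard $\mathbb{P}_m$ as the $\Lambda_{\q}^e$-submodule generated by the $\tilde f_{\beta}$ with $|\beta|=m$. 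Because the monomials occurring in $f_{\beta}$ use exactly $\beta_i$ tensor factors equal to $x_i$, distinct $\beta$ contribute tensors supported on disjoint sets of basis elements and each $f_{\beta}$ has a nonzero leading tensor; this shows the sum $\bigoplus_{|\beta|=m}\Lambda_{\q}\otimes f_{\beta}\otimes\Lambda_{\q}$ is direct and embeds in $\mathbb{B}_m$. Since $d_{\mathbb{B}}$ is $\Lambda_{\q}^e$-linear, it suffices to evaluate it on the generators $\tilde f_{\beta}$ and prove
$$d_{\mathbb{B}}(\tilde f_{\beta})=\sum_{j=1}^n\Big(\textstyle\prod_{l<j} q_{l,j}^{\beta_l}\,x_j\tilde f_{\beta-[j]}+(-1)^m\textstyle\prod_{l>j} q_{j,l}^{\beta_l}\,\tilde f_{\beta-[j]}\,x_j\Big),$$
which is exactly $d_{\mathbb{P}}(\tilde f_{\beta})$ and visibly lies in $\mathbb{P}_{m-1}$.

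The computation splits the bar differential on $\tilde f_{\beta}=1\otimes f_{\beta}\otimes 1$ into three pieces: the outer term $s=0$ (left multiplication of the unit into the first tensor factor), the outer term $s=m$ (right multiplication into the last factor, carrying the sign $(-1)^m$), and the internal terms $1\le s\le m-1$. Here I use the explicit ``$\q$-shuffle'' description of $f_{\beta}$ recorded after its definition, namely that $f_{\beta}$ is the sum over all orderings of the multiset containing $\beta_i$ copies of $x_i$, each weighted by the commuting coefficient accumulated in sorting from increasing order. From this description one reads off, alongside the defining ``peel the rightmost factor'' recursion, the companion ``peel the leftmost factor'' identity $f_{\beta}=\sum_j\prod_{l<j}q_{l,j}^{\beta_l}\,x_j\otimes f_{\beta-[j]}$. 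The $s=0$ term then collapses to $\sum_j\prod_{l<j}q_{l,j}^{\beta_l}\,x_j\otimes f_{\beta-[j]}\otimes 1=\sum_j\prod_{l<j}q_{l,j}^{\beta_l}\,x_j\tilde f_{\beta-[j]}$, and the $s=m$ term collapses via the rightmost recursion to $(-1)^m\sum_j\prod_{l>j}q_{j,l}^{\beta_l}\,\tilde f_{\beta-[j]}x_j$, matching the two halves of $d_{\mathbb{P}}$.

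The crux, and the step I expect to take the most care, is showing the internal sum $\sum_{s=1}^{m-1}(-1)^s(\cdots)$ vanishes. For a fixed internal position $s$ and a fixed ordering $x_{i_1}\otimes\cdots\otimes x_{i_m}$, the bar differential multiplies the adjacent factors $x_{i_s}x_{i_{s+1}}$ in $\Lambda_{\q}$. If $i_s=i_{s+1}$ the product is $x_{i_s}^2=0$ and the term dies. If $i_s\neq i_{s+1}$, I pair this ordering with the one obtained by transposing the entries in positions $s$ and $s+1$: the two orderings contribute at the same position $s$ with the same sign $(-1)^s$, their $\q$-coefficients differ by exactly the factor $q_{i_s,i_{s+1}}$ dictated by the shuffle description, and the relation $x_{i_s}x_{i_{s+1}}=-q_{i_s,i_{s+1}}x_{i_{s+1}}x_{i_s}$ then cancels the two contributions. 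Since every ordering with $i_s\neq i_{s+1}$ is matched uniquely in this way, the entire internal sum vanishes.

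Assembling the three pieces yields $d_{\mathbb{B}}(\tilde f_{\beta})=d_{\mathbb{P}}(\tilde f_{\beta})$, so $d_{\mathbb{B}}$ carries $\mathbb{P}_m$ into $\mathbb{P}_{m-1}$ and restricts there to $d_{\mathbb{P}}$; together with the embedding of graded modules this proves $\mathbb{P}$ is a subcomplex of $\mathbb{B}$. As a cross-check on the bookkeeping of the $\q$-coefficients, the same identity can be obtained by induction on $m$ directly from the two recursions, which is the safest way to guard against sign or exponent errors in the coefficient ratios.
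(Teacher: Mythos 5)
Your proposal is correct and takes essentially the same route as the paper: both split the bar differential into the outer terms $\delta_0$ and $\delta_m$, identify them with the two halves of $d_{\mathbb{P}}$ via the left- and right-peeling recursions for $f_{\beta}$, and kill the internal terms $\delta_s$, $1 \le s \le m-1$, by pairing each ordering with its adjacent transposition, using $x_j^2=0$ and $x_ix_j=-q_{i,j}x_jx_i$ so the paired contributions cancel. Your explicit leftmost-peel identity and the directness-of-sum check are details the paper leaves implicit, but the argument is the same.
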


\begin{proof}
To see this, we will show that the differential on the bar resolution, $d$, induces the map $d_{\mathbb{P}}$ defined above.  Notice for each $m \in \mathbb{N}$, $(\mathbb{P})_m \subset (\mathbb{B}(\Lambda))_m$.  


Fix $\beta \in \mathbb{N}$ with $|\beta|=m$.  Write $$d=\sum_{i=0}^{m} (-1)^i\delta_i$$ where $\delta_i(\lambda_0 \otimes \lambda_1 \otimes ... \otimes \lambda_{m+1})=\lambda_0 \otimes \lambda_1 \otimes ... \otimes \lambda_i \lambda_{i+1} \otimes ... \otimes \lambda_{m+1}$ for $\lambda_0, \lambda_1, ..., \lambda_{m+1} \in \Lambda_{\q}$.  Using this notation,  $d(\tilde{f}_{\beta})$ contains the terms $$\sum_{j=1}^n ( \prod_{l<j} q_{l,j}^{i_l} x_j \tilde{f}_{\beta-[j]} + (-1)^m \prod_{l >j} q_{j,l}^{i_l} \tilde{f}_{\beta-[j]} x_j )$$ as they are generated by $\delta_0(\tilde{f}_{\beta})$ and $\delta_m(\tilde{f}_{\beta})$ respectively.  Thus what needs to be shown is that $\sum_{i=1}^{m-1} (-1)^i\delta_i(\tilde{f}_{\beta})=0$.


For $\alpha \in \{1, 2, ..., n\}^m$ define $x_{\alpha} = x_{\alpha_1} \otimes x_{\alpha_2} \otimes ... \otimes x_{\alpha_m}$.  Then we can write $$f_{\beta}=\sum_{\substack{\alpha \in \{1, 2, ..., n\}^m \\ \#\{\alpha_l=i\}=\beta_i ~\forall i \in \{1,2,...,n\}}} \mathbf{q}^{\alpha}x_{\alpha}$$ where $\mathbf{q}^{\alpha}$ is determined by the commuting coefficients that appear when moving the generators past each other starting from the configuration with generators in increasing order.  
 
Consider a single term $\mathbf{q}^{\alpha}\otimes x_{\alpha}\otimes 1$ in $\tilde{f}_{\beta}$.  If $\alpha_i = \alpha_{i+1}$ for $i \in \{2,3, ..., m-2\}$ then, because $x_j^2 = 0$ for all $j \in \{1,2,...,n\}$, $\delta_i(\mathbf{q}^{\alpha}\otimes x_{\alpha}\otimes 1)=0$.  If $\alpha_i \neq \alpha_{i+1}$ for $i \in \{2,3, ..., m-2\}$ then, without loss of generality, assume $\alpha_i < \alpha_{i+1}$.  By definition $\tilde{f}_{\beta}$ also contains the term $\mathbf{q}^{\alpha}q_{\alpha_i, \alpha_{i+1}}\otimes x_{\alpha_1} \otimes x_{\alpha_2} \otimes ... \otimes x_{\alpha_{i-1}} \otimes x_{\alpha_{i+1}} \otimes x_{\alpha_i} \otimes x_{\alpha_{i+2}} \otimes ... \otimes x_{\alpha_m} \otimes 1$.  Notice \begin{align*} \delta_i&(\mathbf{q}^{\alpha}\otimes x_{\alpha}\otimes 1) \\
&=\mathbf{q}^{\alpha}\otimes x_{\alpha_1} \otimes x_{\alpha_2} \otimes ... \otimes x_{\alpha_{i-1}} \otimes  x_{\alpha_{i}} x_{\alpha_{i+1}} \otimes x_{\alpha_{i+2}} \otimes ... \otimes x_{\alpha_m} \otimes 1 \\
&= \mathbf{q}^{\alpha}(-q_{\alpha_i, \alpha_{i+1}})\otimes x_{\alpha_1} \otimes x_{\alpha_2} \otimes ... \otimes x_{\alpha_{i-1}} \otimes x_{\alpha_{i+1}} x_{\alpha_i} \otimes x_{\alpha_{i+2}} \otimes ... \otimes x_{\alpha_m} \otimes 1\\
&= - \mathbf{q}^{\alpha}q_{\alpha_i, \alpha_{i+1}}\otimes x_{\alpha_1} \otimes x_{\alpha_2} \otimes ... \otimes x_{\alpha_{i-1}} \otimes x_{\alpha_{i+1}} x_{\alpha_i} \otimes x_{\alpha_{i+2}} \otimes ... \otimes x_{\alpha_m} \otimes 1 \end{align*}  and \begin{align*} \delta_i&(\mathbf{q}^{\alpha}q_{\alpha_i, \alpha_{i+1}}\otimes x_{\alpha_1} \otimes x_{\alpha_2} \otimes ... \otimes x_{\alpha_{i-1}} \otimes x_{\alpha_{i+1}} \otimes x_{\alpha_i} \otimes x_{\alpha_{i+2}} \otimes ... \otimes x_{\alpha_m} \otimes 1) \\
&=\q^{\alpha}q_{\alpha_i, \alpha_{i+1}}\otimes x_{\alpha_1} \otimes x_{\alpha_2} \otimes ... \otimes x_{\alpha_{i-1}} \otimes x_{\alpha_{i+1}} x_{\alpha_i} \otimes x_{\alpha_{i+2}} \otimes ... \otimes x_{\alpha_m} \otimes 1.\end{align*}  Thus in $d(\tilde{f}_{\beta})$, these two terms cancel each other and no other terms in $\tilde{f}_{\beta}$ contribute to the $1 \otimes x_{\alpha_1} \otimes x_{\alpha_2} \otimes ... \otimes x_{\alpha_{i-1}} \otimes x_{\alpha_{i+1}} x_{\alpha_i} \otimes x_{\alpha_{i+2}} \otimes ... \otimes x_{\alpha_m} \otimes 1$ term in $d(\tilde{f}_{\beta}).$  
\end{proof}


To see that the diagonal on $\mathbb{B}(\Lambda_{\q})$ restricts to a diagonal on $\mathbb{P}$, recall $$\Delta_{\mathbb{B}}(\lambda_0 \otimes ... \otimes \lambda_{m+1}) = \sum_{i=0}^m (\lambda_0 \otimes ... \otimes \lambda_i \otimes 1) \otimes_{\Lambda_{\q}} (1 \otimes \lambda_{i+1} \otimes ... \otimes \lambda_{m+1})$$ for $\lambda_0, ..., \lambda_{m+1} \in \Lambda_{\q}$.  For $\beta \in \mathbb{N}^m$ and $0 \leq t \leq |\beta|$ fixed, we have $$f_{\beta}= \sum_{\substack{ \alpha+ \gamma =\beta \\ \alpha, \gamma \in \N^m \textrm{ and } |\alpha|=t}}(\prod_{\substack{1 \leq l \leq n \\ k<l}} q_{k,l}^{\gamma_k \alpha_l}) f_{\alpha} \otimes f_{\gamma}.$$  Therefore define, $$\Delta_{\mathbb{P}}(\tilde{f}_{\beta})=\sum_{\alpha+\gamma=\beta} (\prod_{\substack{1 \leq l \leq n \\ k<l}} q_{k,l}^{\gamma_k \alpha_l}) \tilde{f}_{\alpha} \otimes_{\Lambda_{\q}} \tilde{f}_{\gamma}.$$  Then the diagonal map, $\Delta_{\mathbb{P}}$, is induced by $\Delta_{\mathbb{B}}$.



Finally, the motivation for developing $f_{\beta}$ is that we have $\bigoplus_{\substack{\beta \in \mathbb{N}^n \\ |\beta|=m}} \Lambda_{\mathbf{q}}^n \otimes f_{\beta} \otimes \Lambda_{\mathbf{q}}^n \cong \bigoplus_{\substack{\beta \in \mathbb{N}^n \\ |\beta|=m}} \Lambda_{\q} \epsilon_{\beta} \Lambda_{\q}$ by sending $\tilde{f}_{\beta}$ to $\epsilon_{\beta}$ and this isomorphism preserves the differential on the respective complexes.  That is, $\K \cong \mathbb{P}$.  Therefore, using this isomorphism, we have our desired diagonal map $$\Delta_{\mathbb{K}}(\epsilon_{\beta})=\sum_{\alpha+\gamma = \beta} \prod_{\substack{1 \leq l \leq n \\ k<l}} q_{k,l}^{\gamma_k \alpha_l} \epsilon_{\alpha} \otimes_{\Lambda_{\mathbf{q}}^n} \epsilon_{\gamma}$$ and we are finally ready to describe the cup product.

By Theorem \ref{vsthm}, the vector space structure of $\HH^m(\Lambda, \Lambda \rtimes G)$ has a basis given by $(x^{\alpha} \otimes g) \epsilon_{\beta}^*$ such that $\beta-\alpha \in C_g$.  Then, the cup product, defined on these basis elements is given by the following formula.

\begin{thm} \label{cupthm}
If $\alpha, \gamma \in \{0,1\}^n$, $\beta, \kappa \in \mathbb{N}^n$, and $g, h \in G$, then $$(x^{\alpha} \otimes g) \epsilon_{\beta}^* \smile (x^{\gamma} \otimes h) \epsilon_{\kappa}^* = \prod_{l=1}^n \chi_{g,l}^{\gamma_l} \prod_{k<l} q_{k,l}^{\kappa_k \beta_l-\gamma_k \alpha_l}(-1)^{-\gamma_k \alpha_l} (x^{\alpha+\gamma} \otimes gh) \epsilon_{\beta +\kappa}^*.$$
\end{thm}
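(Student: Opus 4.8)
The plan is to compute the cup product directly on the small resolution $\mathbb{K}$ rather than on the bar resolution $\mathbb{B}(\Lambda_{\q})$. This is legitimate precisely because the preceding construction identifies $\mathbb{K} \cong \mathbb{P}$ with $\mathbb{P} \subseteq \mathbb{B}(\Lambda_{\q})$ a subcomplex on which the bar diagonal $\Delta_{\mathbb{B}}$ restricts to the diagonal $\Delta_{\mathbb{K}}(\epsilon_{\nu}) = \sum_{\sigma + \tau = \nu} \prod_{k<l} q_{k,l}^{\tau_k \sigma_l}\, \epsilon_{\sigma} \otimes_{\Lambda_{\q}} \epsilon_{\tau}$. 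Since the cup product is $\mu \circ (f \otimes_{\Lambda_{\q}} f') \circ \Delta$, transporting it through this identification reduces the whole computation to evaluating $(x^{\alpha} \otimes g)\epsilon_{\beta}^* \smile (x^{\gamma} \otimes h)\epsilon_{\kappa}^*$ on the single generator $\epsilon_{\beta+\kappa}$, the only homological degree in which both cochains can be simultaneously nonzero.

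First I would apply $\Delta_{\mathbb{K}}$ to $\epsilon_{\beta+\kappa}$ and then apply $(x^{\alpha} \otimes g)\epsilon_{\beta}^* \otimes_{\Lambda_{\q}} (x^{\gamma}\otimes h)\epsilon_{\kappa}^*$. Because $\epsilon_{\beta}^*$ vanishes on every generator except $\epsilon_{\beta}$, and likewise $\epsilon_{\kappa}^*$ only on $\epsilon_{\kappa}$, the only surviving summand of the diagonal is the one with $\sigma = \beta$ and $\tau = \kappa$; it carries the coefficient $\prod_{k<l} q_{k,l}^{\kappa_k \beta_l}$ and produces $(x^{\alpha}\otimes g)\otimes_{\Lambda_{\q}}(x^{\gamma}\otimes h)$. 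At this point I would confirm that the cup product convention of the preliminaries introduces no Koszul sign in $f \otimes_{\Lambda_{\q}} f'$ (in keeping with the classical Gerstenhaber formula), so that no spurious factor $(-1)^{|\beta||\kappa|}$ enters.

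It then remains to apply the multiplication $\mu$ of $\Lambda_{\q}\rtimes G$. Unwinding $(x^{\alpha}\otimes g)(x^{\gamma}\otimes h) = x^{\alpha}\, {}^{g}(x^{\gamma}) \otimes gh$ yields two further contributions: the diagonal action gives ${}^{g}(x^{\gamma}) = \big(\prod_{l} \chi_{g,l}^{\gamma_l}\big) x^{\gamma}$, and reordering the monomial $x^{\alpha} x^{\gamma}$ into the normal form $x^{\alpha+\gamma}$ inside $\Lambda_{\q}$ introduces one commutation factor $(-q_{k,l})^{-1} = q_{k,l}^{-1}(-1)^{-1}$ for each pair $k < l$ with $\gamma_k = \alpha_l = 1$, that is, the factor $\prod_{k<l} q_{k,l}^{-\gamma_k\alpha_l}(-1)^{-\gamma_k\alpha_l}$. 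Multiplying the diagonal coefficient, the character factor $\prod_l \chi_{g,l}^{\gamma_l}$, and this reordering factor assembles exactly the stated scalar, with underlying basis element $(x^{\alpha+\gamma}\otimes gh)\epsilon_{\beta+\kappa}^*$ (the product being $0$ whenever $x^{\alpha+\gamma}=0$).

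The only genuinely delicate step is this last reordering count: I must verify that a generator $x_k$ from the right-hand block crosses precisely those $x_l$ from the left-hand block with $l > k$, and that each crossing contributes $(-q_{k,l})^{-1}$ rather than $-q_{k,l}$, using $q_{l,k}=q_{k,l}^{-1}$. Everything else is bookkeeping. Finally, since Theorem \ref{vsthm} exhibits these elements as genuine cocycles (the differential vanishes on the $C_g$-components), the chain-level identity descends to the asserted formula on $\HH^*(\Lambda_{\q}, \Lambda_{\q}\rtimes G)$, and hence, after restricting to $G$-invariants, on $\HH^*(\Lambda_{\q}\rtimes G)$.
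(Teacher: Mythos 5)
Your proposal is correct and follows essentially the same route as the paper: identify $\mathbb{K}$ with the subcomplex $\mathbb{P}\subseteq\mathbb{B}(\Lambda_{\q})$ so that the cup product can be computed as $\mu\circ(f\otimes_{\Lambda_{\q}}f')\circ\Delta_{\mathbb{K}}$, observe that only the summand $\sigma=\beta$, $\tau=\kappa$ of the diagonal survives (forcing evaluation on $\epsilon_{\beta+\kappa}$ and contributing $\prod_{k<l}q_{k,l}^{\kappa_k\beta_l}$), and then extract the character factor $\prod_l\chi_{g,l}^{\gamma_l}$ and the reordering factor $\prod_{k<l}(-q_{k,l})^{-\gamma_k\alpha_l}$ from the multiplication in $\Lambda_{\q}\rtimes G$. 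The paper's proof evaluates on a general $\epsilon_{\rho}$ and then notes $\rho=\beta+\kappa$ is forced, but this is the same computation as yours.
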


\begin{proof}
Let $\alpha, \gamma \in \{0,1\}^n$, $\beta, \kappa, \rho \in \mathbb{N}^n$, and $g, h \in G$.  Because we can identify $\K$ as a subcomplex of $\mathbb{B}(\Lambda)$, we can compute the cup product, $(x^{\alpha} \otimes g) \epsilon_{\beta}^* \smile (x^{\gamma} \otimes h) \epsilon_{\kappa}^*$ as the composition 
$$\mathbb{K} \xrightarrow{\Delta_{\mathbb{K}}} \mathbb{K} \otimes_{\Lambda_{\q}} \mathbb{K} \xrightarrow{(x^{\alpha} \otimes g) \epsilon_{\beta}^* \otimes (x^{\gamma} \otimes h) \epsilon_{\kappa}^*} \Lambda_{\q} \rtimes G \otimes_{\Lambda_{\q}} \Lambda_{\q} \rtimes G \xrightarrow{\mu} \Lambda_{\q} \rtimes G.$$

Then 
\begin{align*}
(x^{\alpha} \otimes g) \epsilon_{\beta}^* &\smile (x^{\gamma} \otimes h) \epsilon_{\kappa}^* (\epsilon_{\rho}) \\
&= \mu ((x^{\alpha} \otimes g) \epsilon_{\beta}^* \otimes (x^{\gamma} \otimes h) \epsilon_{\kappa}^*(\Delta_{\mathbb{K}}(\epsilon_{\rho}))) \\
&= \mu ((x^{\alpha} \otimes g) \epsilon_{\beta}^* \otimes (x^{\gamma} \otimes h) \epsilon_{\kappa}^*(\sum_{\rho'+\rho'' = \rho} \prod_{\substack{1 \leq l \leq n \\ k<l}} q_{k,l}^{\rho''_k \rho'_l} \epsilon_{\rho'} \otimes_{\Lambda_{\mathbf{q}}^n} \epsilon_{\rho''})) \\
&= \mu ( \prod_{\substack{1 \leq l \leq n \\ k<l}} q_{k,l}^{\kappa_k \beta_l} (x^{\alpha} \otimes g) \otimes_{\Lambda_{\mathbf{q}}^n} (x^{\gamma} \otimes h)) \\
&= \prod_{\substack{1 \leq l \leq n \\ k<l}} q_{k,l}^{\kappa_k \beta_l} (x^{\alpha} \otimes g) (x^{\gamma} \otimes h) \\
&= \prod_{l=1}^n \chi_{g,l}^{\gamma_l} \prod_{k<l} q_{k,l}^{\kappa_k \beta_l} (x^{\alpha} x^{\gamma} \otimes gh) \\
&= \prod_{l=1}^n \chi_{g,l}^{\gamma_l} \prod_{k<l} q_{k,l}^{\kappa_k \beta_l}(-q_{k,l})^{-\gamma_k \alpha_l} (x^{\alpha+\gamma} \otimes gh). \\
\end{align*}

Notice for the third equality, we need $\rho'=\beta, \rho''=\kappa$, and $\rho=\kappa+\beta$.  
\end{proof}


\subsection{The Gerstenhaber bracket} \label{bracket}

With a bit more structure, we can compute the brackets on $\HH^*(\Lambda_{\mathbf{q}}^n \#G)$ using the techniques of \cite{NW} adapted to this setting.  

\begin{lemma}
$\mathbb{K}$ satisfies Conditions \ref{cond}.
\end{lemma}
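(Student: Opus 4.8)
The plan is to verify the three parts of Conditions \ref{cond} in turn, with $A=\Lambda_{\q}$, working throughout via the identification $\mathbb{K}\cong\mathbb{P}$ of the resolution $\mathbb{K}$ with the subcomplex $\mathbb{P}\subseteq\mathbb{B}(\Lambda_{\q})$ established above. For part (a) I would take $\iota\colon\mathbb{K}\to\mathbb{B}(\Lambda_{\q})$ to be the composite of the isomorphism $\mathbb{K}\cong\mathbb{P}$ with the inclusion $\mathbb{P}\hookrightarrow\mathbb{B}(\Lambda_{\q})$; concretely $\iota(\epsilon_{\beta})=\tilde{f}_{\beta}=1\otimes f_{\beta}\otimes 1$. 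That $\iota$ is a chain map is precisely the assertion that $\mathbb{P}$ is a subcomplex, already proved, together with the fact that the identification $\mathbb{K}\cong\mathbb{P}$ preserves differentials. Since $f_{(0,\dots,0)}=1$, we have $\iota(\epsilon_{0})=1\otimes 1$ in homological degree $0$, so $\iota$ is compatible with the multiplication augmentations of both resolutions and therefore lifts $\mathds{1}_{\Lambda_{\q}}$. Part (c) is likewise essentially in hand: the map $\Delta_{\mathbb{K}}$ was defined exactly so that $\Delta_{\mathbb{P}}$ is the restriction of $\Delta_{\mathbb{B}}$ to $\mathbb{P}$, i.e. $\Delta_{\mathbb{B}}\iota=(\iota\otimes_{\Lambda_{\q}}\iota)\Delta_{\mathbb{K}}$, which is the required compatibility.

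The hard part is part (b): producing a chain map $\pi$ with $\pi\iota=\mathds{1}_{\mathbb{K}}$ \emph{on the nose}, not merely up to homotopy. The comparison theorem alone yields only a retraction up to chain homotopy, so one needs a structural argument. My approach is to show first that $\iota$ is a degreewise split monomorphism. In homological degree $m$, the module $(\mathbb{B}(\Lambda_{\q}))_m=\Lambda_{\q}\otimes\Lambda_{\q}^{\otimes m}\otimes\Lambda_{\q}$ is free over $\Lambda_{\q}^{e}$ on the elements $1\otimes w\otimes 1$ with $w$ ranging over the monomial basis of $\Lambda_{\q}^{\otimes m}$, and each $\tilde{f}_{\beta}$ (with $|\beta|=m$) is a $k$-linear combination of such basis elements whose tensor factors are single generators. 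The monomial tensors occurring for distinct $\beta$ are disjoint, and each $f_{\beta}$ contains the sorted ``pivot'' tensor $x_{1}^{\otimes\beta_{1}}\otimes\cdots\otimes x_{n}^{\otimes\beta_{n}}$ with coefficient $1$; projecting $(\mathbb{B}(\Lambda_{\q}))_m$ onto these pivot coordinates is a $\Lambda_{\q}^{e}$-linear retraction, exhibiting $\iota((\mathbb{K})_m)$ as a free direct summand.

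Consequently the quotient complex $Q=\mathbb{B}(\Lambda_{\q})/\mathbb{P}$ is a complex of projective $\Lambda_{\q}^{e}$-modules concentrated in nonnegative degrees. Applying the long exact homology sequence to the short exact sequence of complexes $0\to\mathbb{P}\to\mathbb{B}(\Lambda_{\q})\to Q\to 0$, and using that both $\mathbb{P}$ and $\mathbb{B}(\Lambda_{\q})$ are acyclic in positive degrees while $\iota$ induces the identity on $H_{0}=\Lambda_{\q}$, I conclude that $Q$ is acyclic. A bounded-below acyclic complex of projectives is contractible; fixing a contracting homotopy $h$ on $Q$ and writing the degreewise-split differential on $\mathbb{B}(\Lambda_{\q})\cong\mathbb{P}\oplus Q$ as a matrix with off-diagonal term $\tau\colon Q\to\mathbb{P}[1]$, the correction $\psi=-\tau h$ satisfies $d_{\mathbb{P}}\psi-\psi d_{Q}=\tau$. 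This $\psi$ promotes the degreewise retraction to a chain-map retraction $\rho\colon\mathbb{B}(\Lambda_{\q})\to\mathbb{P}$ with $\rho|_{\mathbb{P}}=\mathds{1}_{\mathbb{P}}$. Composing with $\mathbb{P}\cong\mathbb{K}$ yields $\pi\colon\mathbb{B}(\Lambda_{\q})\to\mathbb{K}$ with $\pi\iota=\mathds{1}_{\mathbb{K}}$, completing part (b).

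I expect the only genuinely delicate bookkeeping to be the pivot–disjointness argument establishing that $\iota$ is degreewise split; once that is in place, the passage to the acyclic quotient and the splitting of the short exact sequence of complexes are formal. An equivalent and perhaps more self-contained route to the same $\pi$ is a direct induction on homological degree: set $\pi_{0}=\mathds{1}$, force $\pi_{m}\iota_{m}=\mathds{1}$ on the free summand $\iota((\mathbb{K})_m)$ (consistent with the chain-map condition because $\iota$ is a chain map and $\pi_{m-1}\iota_{m-1}=\mathds{1}$), and on a free complement lift the cycle $\pi_{m-1}d^{\mathbb{B}}$ through $d^{\mathbb{K}}$ using exactness of $\mathbb{K}$ in positive degrees; either presentation is acceptable.
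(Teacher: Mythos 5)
Your proposal is correct, and for parts (a) and (c) it coincides with the paper's proof: $\iota(\epsilon_\beta)=\tilde{f}_\beta$ via the identification $\mathbb{K}\cong\mathbb{P}\subseteq\mathbb{B}(\Lambda_{\q})$, and the required compatibility $\Delta_{\mathbb{B}}\iota=(\iota\otimes_{\Lambda_{\q}}\iota)\Delta_{\mathbb{K}}$ is exactly the statement, established in Section \ref{cup}, that $\Delta_{\mathbb{P}}$ is induced by $\Delta_{\mathbb{B}}$. The real comparison is in part (b), where the paper is much terser: it asserts (without proof) that the $\tilde{f}_\beta$ extend to a free $\Lambda_{\q}^e$-basis of $\mathbb{B}_{|\beta|}$, prescribes $\pi(\tilde{f}_\beta)=\epsilon_\beta$, and invokes the Comparison Theorem --- which is precisely the degree-by-degree induction you sketch as your ``equivalent alternative'' at the end, using the freedom in the inductive construction to force $\pi\iota=\mathds{1}_{\mathbb{K}}$ on the nose; your consistency check $d^{\mathbb{K}}\epsilon_\beta=\pi_{m-1}d^{\mathbb{B}}\tilde{f}_\beta$, which follows from $\iota$ being a chain map and the inductive hypothesis, is the point the paper leaves implicit. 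Your primary route is genuinely different and more structural: the pivot/disjointness observation (each $f_\beta$ contains the sorted tensor $x_1^{\otimes\beta_1}\otimes\cdots\otimes x_n^{\otimes\beta_n}$ with coefficient $1$, and distinct $\beta$ involve disjoint monomial tensors) shows $\iota$ is degreewise split; then acyclicity of the projective quotient $Q=\mathbb{B}(\Lambda_{\q})/\mathbb{P}$ gives contractibility, and your correction $\psi=-\tau h$ indeed solves $d_{\mathbb{P}}\psi-\psi d_Q=\tau$ (using $d_{\mathbb{P}}\tau+\tau d_Q=0$ from $d^2=0$), producing the chain retraction. Notably, your pivot argument is exactly what substantiates the paper's unproved basis-extension claim, so your write-up fills a gap in the paper's exposition rather than leaving one. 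What each approach buys: the paper's is shorter and stays entirely within standard comparison-theorem bookkeeping; yours isolates the actual content (a split embedding of complexes with contractible cokernel) and makes the existence of an on-the-nose, rather than up-to-homotopy, retraction conceptually transparent, at the cost of some routine homological formalities.
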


\begin{proof}
(a) Let $\iota: \mathbb{K} \rightarrow \mathbb{B}$ be defined by sending $\epsilon_{\beta} \mapsto \tilde{f_{\beta}}$, with $\epsilon_{\beta}$, $\tilde{f_{\beta}}$ defined as in the previous section.

(b) Extend $\tilde{f_{\beta}}$ to a free $\Lambda_{\q}^e$-basis of $\B_{|\beta|}$.  Let $\pi: \mathbb{B} \rightarrow \mathbb{K}$ be a map $\pi$ such that $\tilde{f_{\beta}} \mapsto \epsilon_{\beta}$.  By the Comparison Theorem, such a map exists and, by construction, $\pi \iota = \mathds{1}_{\mathbb{K}}$.

(c) Let the diagonal map $\Delta_{\mathbb{K}}: \mathbb{K} \rightarrow \mathbb{K} \otimes_{\Lambda_{\mathbf{q}}^n} \mathbb{K}$ be defined by $$\Delta_{\mathbb{K}}(\epsilon_{\beta})= \sum_{\alpha + \gamma = \beta} \prod_{\substack{1 \leq l \leq n \\ k<l}} q_{k,l}^{\gamma_k \alpha_l} \epsilon_{\alpha} \otimes_{\Lambda_{\mathbf{q}}^n} \epsilon_{\gamma}$$ for $\beta \in \mathbb{N}^n$, as in Section \ref{cup}.  The reader may check that $\Delta_{\mathbb{B}}\iota = (\iota \otimes_{\Lambda_{\mathbf{q}}^n} \iota ) \Delta_{\mathbb{K}}$ as required.

\end{proof}

Let $\phi: \K \otimes_A \K \rightarrow \K$ satisfying $d(\phi) = F_{\K}$.  We can define the $\circ$-product on $\HH^*(\Lambda)$ on the chain level as a composition 
$$f \circ g : \K \xrightarrow{\Delta_{\K}} \K \otimes_{\Lambda} \K \xrightarrow{\Delta_{\K} \otimes_{\Lambda} \mathds{1}_{\K}} \K \otimes_{\Lambda} \K \otimes \K \xrightarrow{\mathds{1}_{\K} \otimes_{\Lambda} g \otimes_{\Lambda} \mathds{1}_{\K}} \K \otimes_{\Lambda} \K \xrightarrow{\phi} \K \xrightarrow{f} {\Lambda}$$
for $f \in \Hom_{{\Lambda}^e}((\K)_l, {\Lambda})$ and $g \in \Hom_{{\Lambda}^e}((\K)_m, {\Lambda})$.  However, we would like to define the $\circ$-product on $\HH^*(\Lambda \rtimes G)$.  By \cite{NW2}, we can define such a $\circ$-product using a similar technique, extended trivially to the group. 

Define $\tilde{\mathbb{K}}$ to be the resolution $$ ...\xrightarrow{\tilde{\delta}_3} \mathbb{K}_2 \otimes kG \xrightarrow{\tilde{\delta}_2} \mathbb{K}_1 \otimes kG \xrightarrow{\tilde{\delta}_1} \mathbb{K}_0 \otimes kG \xrightarrow{\tilde{\delta}_0} \Lambda_{\q} \rtimes G  \rightarrow 0$$  where $\tilde{\delta}_m=\delta_m \otimes \mathds{1}_{kG}$.  Let $\Delta_{\tilde{\mathbb{K}}}=\Delta_{\mathbb{K}}\otimes \mathds{1}_{kG}$ be the induced diagonal map on $\tilde{\K}$.  Let $\tilde{\phi}=\phi \otimes \mathds{1}_{kG}$.  Then, by \cite[Section~2.2]{NW2}, for $f \in \Hom_{(\Lambda_{\q} \rtimes G)^e}(\tilde{\mathbb{K}}_m, \Lambda_{\q} \rtimes G)$ and $g \in \Hom_{(\Lambda_{\q} \rtimes G)^e}(\tilde{\mathbb{K}}_{l}, \Lambda_{\q} \rtimes G)$ we can view the $\phi$-circle product, $f \circ_{\phi} g$, as a composition $$\tilde{\mathbb{K}} \xrightarrow{\Delta_{\tilde{\mathbb{K}}}} \tilde{\K} \otimes_{\Lambda_{\q} \rtimes G} \tilde{\K} \xrightarrow{\mathds{1}_{\tilde{\K}}} \tilde{\mathbb{K}} \otimes_{\Lambda_{\q} \rtimes G} \tilde{\mathbb{K}} \otimes_{\Lambda_{\q} \rtimes G} \tilde{\mathbb{K}} \xrightarrow{\mathds{1}_{\tilde{\mathbb{K}}} \otimes g \otimes \mathds{1}_{\tilde{\mathbb{K}}}}  \tilde{\mathbb{K}} \otimes_{\Lambda_{\q} \rtimes G} \tilde{\mathbb{K}} \xrightarrow{\tilde{\phi}} \tilde{\mathbb{K}} \xrightarrow{f} \Lambda_{\q} \rtimes G$$ where the tensor products in $\mathds{1}_{\tilde{\mathbb{K}}} \otimes g \otimes \mathds{1}_{\tilde{\mathbb{K}}}$ are over ${\Lambda_{\q} \rtimes G}$, $\mathds{1}_{\tilde{\mathbb{K}}} \otimes g \otimes \mathds{1}_{\tilde{\mathbb{K}}}$ includes the identification $\tilde{\K} \otimes_{\Lambda_{\q} \rtimes G} \Lambda_{\q} \rtimes G \cong \tilde{\K}$, and this function has Koszul signs as in (\ref{circdef}).  By \cite[Theorem~3.2.5]{NW}, the Gerstenhaber bracket on $\HHD(\Lambda_{\q} \rtimes G)$ is given by $$[f, g] = f \circ_{\tilde{\phi}} g - (-1)^{(m-1)(l-1)}g \circ_{\tilde{\phi}} f$$ at the chain level.  Thus the only remaining work is to determine $\phi$.

Using \cite[Lemma 3.5]{GNW}, we can define $\phi$ iteratively as $\mathbb{K}=\textrm{Tot}(...((\mathbb{K}^{x_1} \otimes^{t_1} \mathbb{K}^{x_2}) \otimes^{t_2} ...)\otimes^{t_{n-1}} \mathbb{K}^{x_n})$ is defined iteratively.  To get a closed form description of $\phi$, we need to introduce some notation.  Let $\mathbb{K}^{(\ell)} = \textrm{Tot}(...((\mathbb{K}^{x_1} \otimes^{t_1} \mathbb{K}^{x_2}) \otimes^{t_2} ...)\otimes^{t_{\ell-1}} \mathbb{K}^{x_\ell})$ and for $\alpha \in \{0,1\}^{\ell}$, define $\alpha_{(l-1)}=(\alpha_1, \alpha_2, ..., \alpha_{\ell-1})$ the $(\ell-1)$-tuple consisting of the first $\ell-1$ entries of $\alpha$.  

\begin{lemma}
If $\beta, \gamma \in \mathbb{N}^n$ and $\alpha \in \{0,1\}^n$, then 
\begin{align*} \phi(\epsilon_{\beta} \otimes_{\Lambda_{\mathbf{q}}^n} x^{\alpha} \epsilon_{\gamma})=&\sum_{\ell=1}^n (-1)^{|\beta|} \delta_{\beta_{\ell+1}+...+\beta_{n}, 0} \delta_{\gamma_1 + ... + \gamma_{\ell-1}, 0} \delta_{\alpha_{\ell}, 1} \prod_{\ell < k \leq n} (-q_{\ell, k})^{\alpha_k(\gamma_{\ell}+1)} \\ 
& \hspace{5 mm} \prod_{1 < k \leq \ell} (-q_{k, \ell})^{\alpha_k(\beta_{\ell}+1)} \prod_{\substack{1 \leq r < s \leq n \\ r \neq \ell \neq s}} (-q_{r, s})^{\alpha_r(\alpha_s+\gamma_s)+\alpha_s \beta_r} \\
& \hspace{10 mm} x_{\ell + 1}^{\alpha_{\ell+1}}...x_n^{\alpha_n} \epsilon_{\beta+\gamma+[\ell]} x_1^{\alpha_1}...x_{\ell-1}^{\alpha_{\ell-1}}
\end{align*} 
\end{lemma}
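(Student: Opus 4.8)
\emph{Approach.} The plan is to induct on $n$, exploiting the iterative description $\K^{(\ell)} = \Tot(\K^{(\ell-1)} \otimes^{t_{\ell-1}} \K^{x_\ell})$ and \cite[Lemma 3.5]{GNW}, which builds a map $\phi$ satisfying $d(\phi) = F_{\K}$ for a twisted tensor product of resolutions out of the corresponding maps for the two factors. Since the construction of \cite[Lemma 3.5]{GNW} automatically produces a $\phi$ with $d(\phi)=F_{\K}$, the only thing left to establish is that iterating it yields exactly the displayed closed form; here I use that $\phi$ need only be exhibited, as any two solutions of $d(\phi)=F_{\K}$ differ by a cycle and induce the same Gerstenhaber bracket. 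Throughout I would reduce a general element of $\K \otimes_{\Lambda_{\q}} \K$ to the spanning elements $\epsilon_{\beta} \otimes_{\Lambda_{\q}} x^{\alpha} \epsilon_{\gamma}$ by $\Lambda_{\q}^e$-linearity and by moving scalars across the tensor over $\Lambda_{\q}$.

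\emph{Base case.} For $n=1$ the resolution is the single-generator Koszul complex $\K^{x_1}$ of $R_{x_1}=k\langle x_1\rangle/(x_1^2)$, with $\delta_m(\epsilon_m)=x_1\epsilon_{m-1}+(-1)^m\epsilon_{m-1}x_1$. First I would solve $d(\phi)=F_{\K^{x_1}}$ directly. Since $\mathbf{m}$ is the augmentation, $F_{\K^{x_1}}(\epsilon_{\beta}\otimes x^{\alpha}\epsilon_{\gamma})$ is supported where $\beta$ or $\gamma$ is zero, and a short computation using $x_1^2=0$ shows the homogeneous solution is
\[
\phi(\epsilon_{\beta}\otimes_{\Lambda_{\q}} x^{\alpha}\epsilon_{\gamma})=(-1)^{\beta}\,\delta_{\alpha,1}\,\epsilon_{\beta+\gamma+1}.
\]
This is precisely the claimed formula specialized to $n=1$: the products over $k$ are all empty, and both summation-type Kronecker deltas $\delta_{\beta_{\ell+1}+\cdots+\beta_n,0}$ and $\delta_{\gamma_1+\cdots+\gamma_{\ell-1},0}$ are automatically $1$ when $\ell=n=1$.

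\emph{Inductive step.} Assuming the formula for $\K^{(n-1)}$, I would write each $\epsilon_{\beta}$ as $\epsilon_{\beta'}\otimes\epsilon_{\beta_n}$ with $\beta'=(\beta_1,\dots,\beta_{n-1})$ and substitute the inductive $\phi^{(n-1)}$ together with the base-case $\phi^{x_n}$ into the two-factor expression of \cite[Lemma 3.5]{GNW} for $\K^{(n)}=\Tot(\K^{(n-1)}\otimes^{t_{n-1}}\K^{x_n})$. This splits $\phi^{(n)}$ into terms where $\phi$ acts on the $\K^{(n-1)}$ factor, which by induction reproduce the summands with $\ell<n$, and terms where $\phi$ acts on the $\K^{x_n}$ factor, which give the single $\ell=n$ summand. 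The conditions $\delta_{\beta_{\ell+1}+\cdots+\beta_n,0}$ and $\delta_{\gamma_1+\cdots+\gamma_{\ell-1},0}$ then appear as exactly the degree-zero constraints that $F_{\K}=\mathbf{m}\otimes_A\mathds{1}_{\K}-\mathds{1}_{\K}\otimes_A\mathbf{m}$ forces on the inert tensor slots through the occurrences of $\mathbf{m}$, and one checks these propagate consistently across the induction.

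\emph{Main obstacle.} The substance of the argument is the coefficient bookkeeping, and I expect this to be the hard part: verifying that the scalar emitted by \cite[Lemma 3.5]{GNW} equals the displayed product of $(-q)$'s together with the global sign $(-1)^{|\beta|}$. Three sources must be reconciled, namely the twisting map $t_i^{<[j]|1>}=-q_{j,i+1}^{-1}$, the rearrangement isomorphism of \cite[Lemma 4.3]{BO} that carries the powers of $(-q_{l,j})$ recorded earlier when passing between $\K^{x_1}\otimes^{t_1}\cdots\otimes^{t_{n-1}}\K^{x_n}$ and $\Lambda_{\q}\,\epsilon_{\beta}\,\Lambda_{\q}$, and the Koszul signs built into the circle product. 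The delicate point is isolating the active index $\ell$ from the inert indices $r\neq\ell\neq s$, so that the active factor yields $\prod_{\ell<k\le n}(-q_{\ell,k})^{\alpha_k(\gamma_{\ell}+1)}\prod_{1<k\le\ell}(-q_{k,\ell})^{\alpha_k(\beta_{\ell}+1)}$ while the inert factors yield $\prod_{r<s,\,r\neq\ell\neq s}(-q_{r,s})^{\alpha_r(\alpha_s+\gamma_s)+\alpha_s\beta_r}$; I would organize this by fixing each pair $(r,s)$ and tallying the exponent of $(-q_{r,s})$ according to whether $r$ and $s$ lie below, at, or above $\ell$.
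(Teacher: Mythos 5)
Your proposal follows essentially the same route as the paper's proof: induction on the number of generators using the iterative construction of $\phi$ from \cite[Lemma 3.5]{GNW}, with the $n=1$ Koszul-complex computation as base case (the paper cites \cite[Lemma 4.2]{GNW} for this, which matches your formula $(-1)^{\beta}\delta_{\alpha,1}\epsilon_{\beta+\gamma+1}$), and an inductive step in which the $\phi_{\mathbb{K}^{(n-1)}}$-terms produce the $\ell<n$ summands and the $\phi_{\mathbb{K}^{x_n}}$-term produces the $\ell=n$ summand. The coefficient bookkeeping you flag as the main obstacle is precisely what the paper's displayed computation carries out (tracking the twisting scalars, the rearrangement isomorphism of \cite[Lemma 4.3]{BO}, and the Koszul signs) before its concluding ``rearrange the terms'' step.
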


\begin{proof}
We will show this by induction.  By \cite[Lemma 4.2]{GNW}, we know for $\beta, \gamma \in \mathbb{N}$ and $\alpha \in \{0,1\}$ $$\phi_{\mathbb{K}^{(1)}}(\epsilon_{\beta} \otimes_{\Lambda_{\mathbf{q}}^n} x_1^{\alpha} \epsilon_{\gamma})=\delta_{\alpha,1}(-1)^{\beta} \epsilon_{\alpha+\gamma+1}.$$

Assume the formula holds for $\phi_{\mathbb{K}^{(n-1)}}$.  Then for $\beta, \gamma \in \mathbb{N}^n$ and $\alpha \in \{0,1\}^n$, using \cite[Lemma 3.5]{GNW} in the second equality, 
\begin{align*}
\phi_{\mathbb{K}^{(n)}}&(\epsilon_{\beta} \otimes_{\Lambda_{\mathbf{q}}^n} x^{\alpha} \epsilon_{\gamma})= \phi (\prod_{k<n} (-q_{k,n})^{\alpha_n \gamma_k} \epsilon_{\beta} \otimes_{\Lambda_{\mathbf{q}}^n} x^{\alpha_{(n-1)}} \epsilon_{\gamma_{(n-1)}} \otimes x^{\alpha_n} \epsilon_{\gamma_n})\\
=& (\phi_{\mathbb{K}^{(n-1)}} \otimes F_{\mathbb{K}^{x_n}}^l + (-1)^{i+p}F_{\mathbb{K}^{(n-1)}}^r \otimes \phi_{\mathbb{K}^{x_n}}) \sigma \\
& \hspace{5 mm} (\prod_{k<n} (-q_{k,n})^{\alpha_n \gamma_k} \epsilon_{\beta} \otimes_{\Lambda_{\mathbf{q}}^n} x^{\alpha_{(n-1)}} \epsilon_{\gamma_{(n-1)}} \otimes x^{\alpha_n} \epsilon_{\gamma_n}) \\
=& (\phi_{\mathbb{K}^{(n-1)}} \otimes F_{\mathbb{K}^{x_n}}^l + (-1)^{i+p}F_{\mathbb{K}^{(n-1)}}^r \otimes \phi_{\mathbb{K}^{x_n}}) \\
& \hspace{5 mm}(\prod_{k<n} (-q_{k,n})^{\alpha_n \gamma_k-\beta_n(\alpha_k + \gamma_k)}(-1)^{\beta_n |\gamma_{(n-1)}|} \epsilon_{\beta_{(n-1)}} \otimes_{\Lambda_{\mathbf{q}}^n} x^{\alpha_{(n-1)}} \epsilon_{\gamma_{(n-1)}} \otimes \epsilon_{\beta_n} \otimes x^{\alpha_n} \epsilon_{\gamma_n}) \\
=& (\prod_{k<n} (-q_{k,n})^{\alpha_n \gamma_k-\beta_n(\alpha_k + \gamma_k)}(-1)^{\beta_n |\gamma_{(n-1)}|}) \\
& \hspace{5 mm} (\sum_{\ell=1}^{n-1} (-1)^{|\beta_{(n-1)}|} \delta_{\beta_{\ell+1}+...+\beta_{n-1}, 0} \delta_{\gamma_1+ ...+ \gamma_{\ell-1}, 0} \delta_{\alpha_{\ell}, 1} \prod_{\ell < k <n-1} (-q_{\ell, k})^{\alpha_k(\gamma_{\ell}+1)} \\
& \hspace{10 mm} \prod_{1 \leq k < \ell} (-q_{k,\ell})^{\alpha_k (\beta_{\ell} +1)} \prod_{\substack{1 \leq r < s \leq n-1 \\ r \neq \ell \neq s}} (-q_{r, s})^{\alpha_r(\alpha_s+\gamma_s)+\alpha_s \beta_r}  \\
& \hspace{10 mm} x_{\ell + 1}^{\alpha_{\ell+1}}...x_{n-1}^{\alpha_{n-1}} \epsilon_{\beta_{(n-1)} +\gamma_{(n-1)}+[\ell]} x_1^{\alpha_1}...x_{\ell-1}^{\alpha_{\ell-1}} \otimes \delta_{\beta_n, 0} x_n^{\alpha_n} \epsilon_{\gamma_n} \\
& \hspace{5 mm}+ (-1)^{|\beta_{(n-1)}| + |\gamma_{(n-1)}|} \delta_{|\gamma_{(n-1)}|, 0} \epsilon_{\beta_{(n-1)}} x^{\alpha_{(n-1)}} \otimes \delta_{\alpha_n, 1} (-1)^{\beta_n} \epsilon_{\beta_n+ \gamma_n +1} )
\end{align*}
by the inductive hypothesis.  Now, rearrange the terms to get the statement in the lemma.
\end{proof}




We now have all of the necessary pieces to compute the Gerstenhaber bracket, \begin{align} \label{phibrack} [f, g] = f \circ_{\tilde{\phi}} g - (-1)^{(m-1)(l-1)}g \circ_{\tilde{\phi}} f \end{align} for $f \in \Hom_{(\Lambda_{\q} \rtimes G)^e}((\tilde{\mathbb{K}})_m, \Lambda_{\q} \rtimes G)$ and $g \in \Hom_{(\Lambda_{\q} \rtimes G)^e}((\tilde{\mathbb{K}})_{l}, \Lambda_{\q} \rtimes G)$.  Notice $\{(x^{\alpha} \otimes g) (\epsilon_{\beta} \otimes 1)^*\}_{\alpha \in \{0,1\}^n, \beta \in \mathbb{N}^n, g \in G}$ forms a basis of $\textrm{Hom}_{(\Lambda_{\q} \rtimes G)^e}(\tilde{\mathbb{K}}, \Lambda_{\q} \rtimes G)$.  In the following theorem, we give the circle product on elements of this form.  While these elements are not necessarily non-zero elements of cohomology, the given formula can be extended linearly to give a well-defined bracket on cohomology by restricting to the elements of the form as in Theorem~\ref{vsthm}.

\begin{thm} \label{circthm} For $\alpha, \gamma \in \{0,1\}^n$, $\beta, \kappa \in \mathbb{N}^n$, and $g, h \in G$
\begin{align*}
(x^{\gamma} \otimes h)(\epsilon_{\kappa} \otimes 1)^* &\circ_{\tilde{\phi}} (x^{\alpha} \otimes g)(\epsilon_{\beta} \otimes 1)^* \\
=& \sum_{r=1}^n \sum_{\substack{\rho'+ \rho'' = \kappa+ \beta-[r]  \\ (\rho'-\beta)_{\ell} \geq 0 ~\forall \ell \in \{1,2,...,n\}}} (-1)^{|\rho'-\beta|(|\beta|+1)} \delta_{\rho'_{r+1}, \beta_{r+1}}...\delta_{\rho'_n, \beta_n} \delta_{\rho''_1+ ..+ \rho''_{r-1}, 0} \delta_{\alpha_r, 1}  \\
& \hspace{40 mm}  \mathbf{Q} (x^{\alpha + \gamma -[r]} \otimes hg)(\epsilon_{\kappa+\beta-[r]} \otimes 1)^*
\end{align*} 
where 
\begin{align*}
\mathbf{Q} = &\prod_{1 \leq s <r} \chi_{h,s}^{\alpha_s}(-q_{s, r})^{\alpha_s(\rho'_r-\beta_{r}+1)}  \prod_{1 \leq k < \ell < r \leq n} q_{k, \ell}^{\beta_k (\rho'-\beta)_{\ell}} \prod_{1 \leq r < k < \ell \leq n} q_{k, \ell}^{\rho''_k \beta_{\ell}}\prod_{r < s \leq n} (-q_{r, s})^{\alpha_s(\rho''_{r}+1)} \\
& \hspace{5 mm} \prod_{\substack{1 \leq t < u \leq n \\ t \neq r \neq u}} (-q_{t, u})^{\alpha_t(\alpha_u+\rho''_u)+\alpha_t (\rho'_u-\beta_u)} \prod_{\substack{1 \leq s <r \\s < v \leq n}} (-q_{s,v})^{-\alpha_s \gamma_v} \prod_{1 \leq v < r < s \leq n }(-q_{v,s})^{-(\gamma_v+\alpha_v)\alpha_s} \\
& \hspace{5 mm} \prod_{\substack{r < s \leq n \\ r \leq v < s}}(-q_{v,s})^{-\gamma_v \alpha_s}.
\end{align*}
\end{thm}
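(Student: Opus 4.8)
The plan is to compute the $\circ_{\tilde\phi}$-product directly on the given basis elements by tracing them through the composition that defines it, exactly as in \eqref{circdef} but extended to $\tilde{\mathbb{K}}$. Since $\tilde{\mathbb{K}} = \mathbb{K} \otimes kG$ with all structure maps of the form (map on $\mathbb{K}$) $\otimes\, \mathds{1}_{kG}$, the group acts only as a bookkeeping device on the $G$-factor; the heart of the computation takes place on $\mathbb{K}$ itself, and the group elements $g,h$ contribute the factor $hg$ in the output together with the $\chi_{h,s}^{\alpha_s}$ twists that appear when $h$ is commuted past the $x^{\alpha}$ coming from the second function. So first I would set up the five-step composition $\Delta_{\tilde{\mathbb{K}}}$, then $\Delta_{\tilde{\mathbb{K}}} \otimes_{\Lambda_{\q}\rtimes G} \mathds{1}$, then insert $g = (x^{\alpha}\otimes g)(\epsilon_{\beta}\otimes 1)^*$ in the middle slot with its Koszul sign, then apply $\tilde\phi$, and finally evaluate $f = (x^{\gamma}\otimes h)(\epsilon_{\kappa}\otimes 1)^*$.

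The key steps, in order, are as follows. First, apply the iterated diagonal: $(\Delta_{\mathbb{K}}\otimes_{\Lambda}\mathds{1})\Delta_{\mathbb{K}}(\epsilon_{\rho})$ produces a sum over triples $\rho^{(1)}+\rho^{(2)}+\rho^{(3)} = \rho$ with the product of $q_{k,\ell}$-coefficients recording the commutations, using the formula from Section~\ref{cup}. Second, insert $g$ into the middle tensor factor; the Kronecker deltas $\delta_{\alpha_r,1}$, the $\delta_{\rho'_{r+1},\beta_{r+1}}\cdots\delta_{\rho'_n,\beta_n}$, and $\delta_{\rho''_1+\cdots+\rho''_{r-1},0}$ that appear in the final statement are precisely the constraints forced by the explicit $\phi$-formula of the preceding lemma once the middle factor has been evaluated to $(x^\alpha\otimes g)\epsilon_{\rho^{(2)}}^*$ and then fed through $\tilde\phi$ in the fourth step. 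Third, track the Koszul sign $(-1)^{m\|x_1\|}$ from \eqref{circdef}, which is the source of the exponent $(-1)^{|\rho'-\beta|(|\beta|+1)}$ (here $|\beta| = m-1$ is the homological degree of $g$'s input and $|\rho'-\beta|$ records the homological degree of the left outer factor). Fourth, apply $\tilde\phi = \phi\otimes\mathds{1}_{kG}$ using the closed form from the previous lemma with its parameters specialized to $\beta \rightsquigarrow \rho^{(1)}$, $\gamma \rightsquigarrow \rho^{(3)}$, $\alpha$ the middle function's $\alpha$, and match the resulting products of $(-q_{\ell,k})$ against the factors assembled in $\mathbf{Q}$. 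Fifth, evaluate $f$ on the single surviving $\epsilon_{\kappa+\beta-[r]}$ term, which forces $\kappa = \rho^{(1)}+\rho^{(3)}$-type index matching and multiplies the two group-algebra elements, producing $(x^{\alpha+\gamma-[r]}\otimes hg)$ together with the $\chi_{h,s}^{\alpha_s}$ factors and the final three products of $(-q)^{\pm}$ in $\mathbf{Q}$ coming from commuting $x^{\alpha-[r]}$ past $x^{\gamma}$ inside $\Lambda_{\q}\rtimes G$.

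The main obstacle will be the sign and coefficient bookkeeping: each of the five maps contributes its own product of $q$'s and $(-1)$'s, and assembling them into exactly the stated $\mathbf{Q}$ requires carefully partitioning index pairs $(k,\ell)$ or $(t,u)$ according to their position relative to the distinguished index $r$ (the four regimes $k<\ell<r$, $r<k<\ell$, $t\ne r\ne u$, and the boundary cases $s<r$, $r<s$ visible in $\mathbf{Q}$). I expect the $G$-twist to be the comparatively easy part, since diagonality gives $^h(x^\alpha) = \prod_s \chi_{h,s}^{\alpha_s} x^\alpha$ cleanly; the genuine difficulty is verifying that the $\phi$-formula's internal factors $\prod_{\ell<k}(-q_{\ell,k})^{\alpha_k(\gamma_\ell+1)}$ and $\prod_{1<k\le\ell}(-q_{k,\ell})^{\alpha_k(\beta_\ell+1)}$, after substituting $\rho^{(1)},\rho^{(3)}$ and recombining with the diagonal coefficients, collapse to the $(\rho'_r-\beta_r+1)$ and $(\rho''_r+1)$ exponents in $\mathbf{Q}$. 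I would organize this by first recording the contribution of each of the five steps as a separate product and only at the very end regrouping the exponents; attempting to simplify prematurely is where errors would most likely creep in.
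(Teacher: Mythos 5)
Your proposal follows essentially the same route as the paper's proof: evaluate the circle product on $\epsilon_{\rho} \otimes 1$ by applying the iterated diagonal (a sum over index triples with $q$-coefficients), observe that inserting $(x^{\alpha}\otimes g)(\epsilon_{\beta}\otimes 1)^*$ in the middle slot forces the middle index to equal $\beta$ and imposes $(\rho'-\beta)_{\ell}\geq 0$, apply the closed-form $\phi$ lemma with its parameters specialized to $\rho'-\beta$ and $\rho''$, and finally evaluate $(x^{\gamma}\otimes h)(\epsilon_{\kappa}\otimes 1)^*$ using the skew multiplication in $\Lambda_{\q}\rtimes G$ to produce the $\chi_{h,s}^{\alpha_s}$ factors and the remaining commutation coefficients in $\mathbf{Q}$. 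One minor imprecision: the exponent $(-1)^{|\rho'-\beta|(|\beta|+1)}$ is not the Koszul sign alone, but the product of the Koszul sign $(-1)^{|\rho'-\beta||\beta|}$ with the factor $(-1)^{|\rho'-\beta|}$ coming from the $\phi$ formula itself (its $(-1)^{|\beta|}$ with $\beta$ replaced by $\rho'-\beta$); your stated plan of recording each step's contribution separately before regrouping would catch this.
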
 

\begin{proof}
\begin{align*}
 (x^{\gamma} \otimes h)(\epsilon_{\kappa} \otimes 1)^* &\circ_{\tilde{\phi}} (x^{\alpha} \otimes g)(\epsilon_{\beta} \otimes 1)^*(\epsilon_{\rho} \otimes 1) \\
=& (x^{\gamma} \otimes h)(\epsilon_{\kappa} \otimes 1)^* \tilde{\phi} (\mathds{1}_{\tilde{\mathbb{K}}} \otimes (x^{\alpha} \otimes g)(\epsilon_{\beta} \otimes 1)^* \otimes \mathds{1}_{\tilde{\mathbb{K}}} ) \tilde{\Delta}^{(2)} (\epsilon_{\rho} \otimes 1) \\
=& (x^{\gamma} \otimes h)(\epsilon_{\kappa} \otimes 1)^* \tilde{\phi} (\mathds{1}_{\tilde{\mathbb{K}}} \otimes (x^{\alpha} \otimes g)(\epsilon_{\beta} \otimes 1)^* \otimes \mathds{1}_{\tilde{\mathbb{K}}} ) (\tilde{\Delta} \otimes \mathds{1}_{\tilde{\mathbb{K}}}) \\
& \hspace{5 mm} \Big(\sum_{\rho'+ \rho'' = \rho} \prod_{\substack{1 \leq l \leq n \\ k<l}} q_{k, l}^{\rho''_k \rho'_{l}} \epsilon_{\rho'} \otimes \epsilon_{\rho''} \otimes 1\Big) \\
=& (x^{\gamma} \otimes h)(\epsilon_{\kappa} \otimes 1)^* \tilde{\phi} (\mathds{1}_{\tilde{\mathbb{K}}} \otimes (x^{\alpha} \otimes g)(\epsilon_{\beta} \otimes 1)^* \otimes \mathds{1}_{\tilde{\mathbb{K}}} ) \\
& \hspace{5 mm} \Big(\sum_{\nu'+ \nu'' = \rho'} \sum_{\rho'+ \rho'' = \rho} \prod_{\substack{1 \leq l \leq n \\ k<l}} q_{k, l}^{\rho''_k \rho'_{l}+ \nu''_k \nu'_{l}} \epsilon_{\nu'} \otimes \epsilon_{\nu''} \otimes \epsilon_{\rho''} \otimes 1\Big).
\end{align*}
In order to get a non-zero output from the function $\mathds{1}_{\tilde{\mathbb{K}}} \otimes (x^{\alpha} \otimes g)(\epsilon_{\beta} \otimes 1)^* \otimes \mathds{1}_{\tilde{\mathbb{K}}}$, we need $\nu''= \beta$.  Set $\nu''= \beta$, then $\nu'=\rho'-\beta$.  Applying this map thus gives us the Koszul sign $(-1)^{|\rho'-\beta||\beta|}$, making 
\begin{align*}
(x^{\gamma} \otimes h)(\epsilon_{\kappa} \otimes 1)^* &\circ_{\tilde{\phi}} (x^{\alpha} \otimes g)(\epsilon_{\beta} \otimes 1)^*(\epsilon_{\rho} \otimes 1) \\
=& (x^{\gamma} \otimes h)(\epsilon_{\kappa} \otimes 1)^* \tilde{\phi} \\
& \hspace{5 mm} \Big(\sum_{\substack{\rho'+ \rho'' = \rho \\ (\rho'-\beta)_{l} \geq 0 ~\forall l \in \{1,2,...,n\}}} (-1)^{|\rho'-\beta||\beta|} \prod_{\substack{1 \leq l \leq n \\ k<l}} q_{k, l}^{\rho''_k \rho'_{l}+ \beta_k (\rho'-\beta)_{l}} \\
&\hspace{40 mm} \epsilon_{\rho'-\beta} \otimes (x^{\alpha} \otimes g) \otimes \epsilon_{\rho''} \otimes 1\Big).
\end{align*}
We need $(\rho'-\beta)_{l} \geq 0$ for all $l \in \{1,2,...,n\}$ because $\epsilon_{\rho'-\beta}$ is tracking homological degree which is positive in each coordinate.  Therefore $\delta_{(\rho'-\beta)_{r+1}+ ... + (\rho'-\beta)_{n}, 0}=\delta_{\rho'_{r+1}, \beta_{r+1}}...\delta_{\rho'_n, \beta_n}$.  We will use this in the next expression.
\begin{align*}
(x^{\gamma} \otimes h)(\epsilon_{\kappa} \otimes 1)^* &\circ_{\tilde{\phi}} (x^{\alpha} \otimes g)(\epsilon_{\beta} \otimes 1)^*(\epsilon_{\rho} \otimes 1) \\
=& (x^{\gamma} \otimes h)(\epsilon_{\kappa} \otimes 1)^* \tilde{\phi} \\
& \hspace{5 mm} \Big(\sum_{\substack{\rho'+ \rho'' = \rho \\ (\rho'-\beta)_{l} \geq 0 ~\forall l \in \{1,2,...,n\}}} (-1)^{|\rho'-\beta||\beta|} \prod_{\substack{1 \leq l \leq n \\ k<l}} q_{k, l}^{\rho''_k \rho'_{l}+ \beta_k (\rho'-\beta)_{l}} \\
&\hspace{40 mm} \epsilon_{\rho'-\beta} \otimes (x^{\alpha} \otimes g) \otimes \epsilon_{\rho''} \otimes 1\Big) \\
=& (x^{\gamma} \otimes h)(\epsilon_{\kappa} \otimes 1)^* \\
& \hspace{5 mm} \Big(\sum_{\substack{\rho'+ \rho'' = \rho \\ (\rho'-\beta)_{l} \geq 0 ~\forall l \in \{1,2,...,n\}}} (-1)^{|\rho'-\beta||\beta|} \prod_{\substack{1 \leq l \leq n \\ k<l}} q_{k, l}^{\rho''_k \rho'_{l}+ \beta_k (\rho'-\beta)_{l}} \\
& \hspace{5 mm} \sum_{r=1}^n (-1)^{|\rho'-\beta|} \delta_{\rho'_{r+1}, \beta_{r+1}}...\delta_{\rho'_n, \beta_n} \delta_{\rho''_1+ ..+ \rho''_{r-1}, 0} \delta_{\alpha_r, 1} \prod_{r < s \leq n} (-q_{r, s})^{\alpha_s(\rho''_{r}+1)} \\
& \hspace{5 mm} \prod_{1 \leq s < r} (-q_{s, r})^{\alpha_s(\rho'_r-\beta_{r}+1)} \prod_{\substack{1 \leq t < u \leq n \\ t \neq r \neq u}} (-q_{t, u})^{\alpha_t(\alpha_u+\rho''_u)+\alpha_t (\rho'_u-\beta_u)} \\
& \hspace{5 mm} x_{r + 1}^{\alpha_{r+1}}...x_n^{\alpha_n} \epsilon_{\rho'-\beta+\rho''+[r]} x_1^{\alpha_1}...x_{r-1}^{\alpha_{r-1}} \otimes g\Big). 
\end{align*}
In order to get a non-zero output from the function $(x^{\gamma} \otimes h)(\epsilon_{\kappa} \otimes 1)^*$, we need $\rho'-\beta+\rho''+[r] = \kappa$.  That is, $\kappa + \beta - [r] = \rho' + \rho''= \rho$.  Notice $$x_{r + 1}^{\alpha_{r+1}}...x_n^{\alpha_n} \epsilon_{\rho'-\beta+\rho''+[r]} x_1^{\alpha_1}...x_{r-1}^{\alpha_{r-1}} \otimes g= (x_{r + 1}^{\alpha_{r+1}}...x_n^{\alpha_n} \otimes 1) (\epsilon_{\rho'-\beta+\rho''+[r]} \otimes 1)( x_1^{\alpha_1}...x_{r-1}^{\alpha_{r-1}} \otimes g)$$ by the definition of the multiplication on $\Lambda_{\q} \rtimes G$.  The second expression makes it clearer how to apply $(x^{\gamma} \otimes h)(\epsilon_{\kappa} \otimes 1)^*$.  Then 
\begin{align*}
(x^{\gamma} \otimes h)(\epsilon_{\kappa} \otimes 1)^* &\circ_{\tilde{\phi}} (x^{\alpha} \otimes g)(\epsilon_{\beta} \otimes 1)^*(\epsilon_{\rho} \otimes 1) \\
=& \sum_{\substack{\rho'+ \rho'' = \kappa+ \beta-[r]  \\ (\rho'-\beta)_{l} \geq 0 ~\forall l \in \{1,2,...,n\}}} (-1)^{|\rho'-\beta||\beta|} \prod_{\substack{1 \leq l \leq n \\ k<l}} q_{k, l}^{\rho''_k \rho'_{l}+ \beta_k (\rho'-\beta)_{l}} \\
& \hspace{5 mm} \sum_{r=1}^n (-1)^{|\rho'-\beta|} \delta_{\rho'_{r+1}, \beta_{r+1}}...\delta_{\rho'_n, \beta_n} \delta_{\rho''_1+ ..+ \rho''_{r-1}, 0} \delta_{\alpha_r, 1} \prod_{r < s \leq n} (-q_{r, s})^{\alpha_s(\rho''_{r}+1)} \\
& \hspace{5 mm} \prod_{1 \leq s < r} (-q_{s, r})^{\alpha_s(\rho'_r-\beta_{r}+1)} \prod_{\substack{1 \leq t < u \leq n \\ t \neq r \neq u}} (-q_{t, u})^{\alpha_t(\alpha_u+\rho''_u)+\alpha_t (\rho'_u-\beta_u)} \\
& \hspace{5 mm} (x_{r + 1}^{\alpha_{r+1}}...x_n^{\alpha_n} \otimes 1)(x^{\gamma} \otimes h)( x_1^{\alpha_1}...x_{r-1}^{\alpha_{r-1}} \otimes g) \\
=&  \sum_{r=1}^n \sum_{\substack{\rho'+ \rho'' = \kappa+ \beta-[r]  \\ (\rho'-\beta)_{l} \geq 0 ~\forall l \in \{1,2,...,n\}}} (-1)^{|\rho'-\beta|(|\beta|+1)} \prod_{\substack{1 \leq l \leq n \\ k<l}} q_{k, l}^{\rho''_k \rho'_{l}+ \beta_k (\rho'-\beta)_{l}} \\
& \hspace{5 mm}  \delta_{\rho'_{r+1}, \beta_{r+1}}...\delta_{\rho'_n, \beta_n} \delta_{\rho''_1+ ..+ \rho''_{r-1}, 0} \delta_{\alpha_r, 1} \prod_{r < s \leq n} (-q_{r, s})^{\alpha_s(\rho''_{r}+1)} \\
& \hspace{5 mm}\prod_{1 \leq s < r} (-q_{s, r})^{\alpha_s(\rho'_r-\beta_{r}+1)} \prod_{\substack{1 \leq t < u \leq n \\ t \neq r \neq u}} (-q_{t, u})^{\alpha_t(\alpha_u+\rho''_u)+\alpha_t (\rho'_u-\beta_u)} \prod_{1 \leq s <r} \chi_{h,s}^{\alpha_s} \\
& \hspace{5 mm} \prod_{\substack{1 \leq s <r \\s < v \leq n}} (-q_{s,v})^{-\alpha_s \gamma_v} \prod_{\substack{r < s \leq n \\ 1 \leq v <r }}(-q_{v,s})^{-(\gamma_v+\alpha_v)\alpha_s} \prod_{\substack{r < s \leq n \\ r \leq v < s}}(-q_{v,s})^{-\gamma_v \alpha_s} \\
& \hspace{30 mm} x^{\alpha + \gamma -[r]} \otimes hg.
\end{align*}
This expression can be simplified slightly to eliminate trivial terms.  That is, 
\begin{align*}
(x^{\alpha} \otimes g)(\epsilon_{\beta} \otimes 1)^* &\circ_{\tilde{\phi}} (x^{\gamma} \otimes h)(\epsilon_{\kappa} \otimes 1)^* \\
=& \sum_{r=1}^n \sum_{\substack{\rho'+ \rho'' = \kappa+ \beta-[r]  \\ (\rho'-\beta)_{\ell} \geq 0 ~\forall \ell \in \{1,2,...,n\}}} (-1)^{|\rho'-\beta|(|\beta|+1)} \delta_{\rho'_{r+1}, \beta_{r+1}}...\delta_{\rho'_n, \beta_n} \delta_{\rho''_1+ ..+ \rho''_{r-1}, 0} \delta_{\alpha_r, 1}  \\
& \hspace{5 mm} \prod_{1 \leq s <r} \chi_{h,s}^{\alpha_s}(-q_{s, r})^{\alpha_s(\rho'_r-\beta_{r}+1)}  \prod_{1 \leq k < \ell < r \leq n} q_{k, \ell}^{\beta_k (\rho'-\beta)_{\ell}} \prod_{1 \leq r < k < \ell \leq n} q_{k, \ell}^{\rho''_k \beta_{\ell}} \\
& \hspace{5 mm} \prod_{r < s \leq n} (-q_{r, s})^{\alpha_s(\rho''_{r}+1)} \prod_{\substack{1 \leq t < u \leq n \\ t \neq r \neq u}} (-q_{t, u})^{\alpha_t(\alpha_u+\rho''_u)+\alpha_t (\rho'_u-\beta_u)} \prod_{\substack{1 \leq s <r \\s < v \leq n}} (-q_{s,v})^{-\alpha_s \gamma_v} \\
& \hspace{5 mm} \prod_{1 \leq v < r < s \leq n }(-q_{v,s})^{-(\gamma_v+\alpha_v)\alpha_s} \prod_{\substack{r < s \leq n \\ r \leq v < s}}(-q_{v,s})^{-\gamma_v \alpha_s} \\
& \hspace{30 mm} (x^{\alpha + \gamma -[r]} \otimes hg)(\epsilon_{\kappa+\beta-[r]} \otimes 1)^*.
\end{align*}

\end{proof}


\subsection{Example in 2 generators} \label{ex}

Now that we have the general formulas for the Gerstenhaber algebra structure on $\HH^*(\Lambda_{\mathbf{q}}^n \rtimes G)$, we can apply them to a simple example very similar to the example computed in \cite[5.1]{GNW}.  Let $n=2$ and assume $q_{1,2}$ is not a root of unity.  For simplicity, let $q=q_{1,2}$.

Then, by Theorem \ref{vsthm}, $$\HH^m(\Lambda_{\textbf{q}}^2 \rtimes G) \cong (\bigoplus_{\substack{\beta \in \mathbb{N}^n \\ |\beta|=m}} \bigoplus_{\substack{\alpha \in \{0,1\}^n \\ \beta-\alpha \in C_g}} span_k \{(x^{\alpha} \otimes g) \epsilon_{\beta}^*\})^G.$$ where $C_g = \{\gamma \in (\mathbb{N} \cup \{-1\})^2 | \forall i, \gamma_i=-1 \textrm{ or } (-1)^{\gamma_i}\prod_{k \neq i} (-q_{k,l})^{\gamma_k} = \chi_{g,i} \}.$

Therefore we have two conditions on the $\gamma=\beta-\alpha$ for which $(x^{\alpha} \otimes g) \epsilon_{\beta}^*$ is non-trivial in $\HH^m(\Lambda_{\textbf{q}}^2, \Lambda_{\textbf{q}}^2 \rtimes G)$,
\begin{align*}
\gamma_1= -1 \textrm{ or }& (-1)^{\gamma_1}(-q^{-1})^{\gamma_2} = \chi_{g,1} \\
\textrm{ and } \gamma_2= -1 \textrm{ or }& (-1)^{\gamma_2}(-q)^{\gamma_1} = \chi_{g,2}.
\end{align*}

If $\gamma_1=-1$, then, as $q$ is not a root of unity and $\chi_{g,2}$ must be a root of unity, we cannot have $(-q)^{-1} = (-1)^{\gamma_2}\chi_{g,2}$ and thus, for $\gamma \in C_g$, we need $\gamma_2 = -1$.

Alternatively, if $\gamma_1 \neq -1$, then for $\gamma \in C_g$, we need $(-1)^{\gamma_1}(-q^{-1})^{\gamma_2} = \chi_{g,1}$.  But, again because $q$ is not a root of unity, we must have $\gamma_2=0$ and $\chi_{g,1}=1$.  Thus $\gamma_1 \neq -1$ forces $\gamma_2 \neq -1$ also.  Therefore, for $\gamma \in C_g$, $\gamma_1$ must satisfy $(-1)^{\gamma_2}(-q)^{\gamma_1} = \chi_{g,2}$, forcing $\gamma_1=0$ and $\chi_{g,2}=1$.

That is, we have two options for non-trivial elements, either $\gamma=(-1,-1)$ or $\gamma=(0,0)$ and $\chi_{g,1}=\chi_{g,2}=1$, making 
\begin{align*}
\HH^*(\Lambda_{\textbf{q}}^2 \rtimes G) &\cong ( span_k \{\epsilon_{0,0}^*, \{(x_1x_2 \otimes g) \epsilon_{0,0}^*\}_{g \in G} , \\
& \hspace{20 mm}  \{(x_2 \otimes g)\epsilon_{0,1}^*, (x_1 \otimes g)\epsilon_{1,0}^*, (x_1 x_2 \otimes g)\epsilon_{1,1}^* \}_{\substack{g \in G \\ \chi_{g,1}=\chi_{g,2}=1}} \})^G \\
&\cong  span_k \{ \epsilon_{0,0}^*, \{(x_1x_2 \otimes g) \epsilon_{0,0}^*\}_{\substack{g \in Z(G) \\ \chi_{g,1}\chi_{g,2}=1}} ,\\
& \hspace{20 mm} \{(x_2 \otimes g)\epsilon_{0,1}^*, (x_1 \otimes g)\epsilon_{1,0}^*, (x_1 x_2 \otimes g)\epsilon_{1,1}^* \}_{\substack{g \in Z(G) \\ \chi_{g,1}=\chi_{g,2}=1}} \}.
\end{align*}

We can now use our formula from Theorem \ref{cupthm}, $$(x^{\alpha} \otimes g) \epsilon_{\beta}^* \smile (x^{\gamma} \otimes h) \epsilon_{\kappa}^* = \prod_{l=1}^2 \chi_{g,l}^{\gamma_l} \prod_{k<l} q_{k,l}^{\kappa_k \beta_l-\gamma_k \alpha_l}(-1)^{-\gamma_k \alpha_l} (x^{\alpha+\gamma} \otimes gh) \epsilon_{\beta +\kappa}^*$$ to compute cup products.  Because of the $x^{\alpha + \gamma}$ in the result of the product, the only possibly non-zero cup products are
\begin{align*}
(x_2 \otimes g) \epsilon_{0,1}^* \smile (x_1 \otimes h) \epsilon_{1,0}^* &= \chi_{g,1}^1 q^{1-1}(-1)^1(x_1 x_2 \otimes gh) \epsilon_{1,1}^* \\
&=-\chi_{g,1}(x_1 x_2 \otimes gh) \epsilon_{1,1}^* \\
&=-(x_1 x_2 \otimes gh) \epsilon_{1,1}^*
\end{align*}
and, using either the formula or the anti-commutativity of $\smile$ on $\HH^m(\Lambda_{\mathbf{q}}^2 \rtimes G)$, $$(x_1 \otimes h) \epsilon_{0,1}^* \smile (x_2 \otimes g) \epsilon_{1,0}^* = (x_1 x_2 \otimes gh) \epsilon_{1,1}^*.$$

Finally, we can use our formula from Theorem~\ref{circthm}, restated for the case $n=2$, 
\begin{align*}
(x^{\gamma} \otimes h)(\epsilon_{\kappa} \otimes 1)^*  &\circ (x^{\alpha} \otimes g)(\epsilon_{\beta} \otimes 1)^*\\
=& \sum_{r=1}^2 \sum_{\substack{\rho'+ \rho'' = \kappa+ \beta-[r]  \\ (\rho'-\beta)_{\ell} \geq 0 ~\forall \ell \in \{1,2\}}} (-1)^{|\rho'-\beta|(|\beta|+1)} \delta_{\rho'_{r+1}, \beta_{r+1}}...\delta_{\rho'_n, \beta_n} \delta_{\rho''_1+ ..+ \rho''_{r-1}, 0} \delta_{\alpha_r, 1}  \\
& \hspace{5 mm} \prod_{1 \leq s <r} \chi_{h,s}^{\alpha_s}(-q_{s, r})^{\alpha_s(\rho'_r-\beta_{r}+1)} \prod_{r < s \leq 2} (-q_{r, s})^{\alpha_s(\rho''_{r}+1)} (x^{\alpha + \gamma -[r]} \otimes hg) \\
& \hspace{30 mm} (\epsilon_{\kappa+\beta-[r]} \otimes 1)^*
\end{align*}
to compute brackets.  Then the non-zero $\circ$-products are 
\begin{align*}
(x_2 \otimes h)(\epsilon_{0,1} \otimes 1)^*  &\circ (x_1 x_2 \otimes g)(\epsilon_{0,0} \otimes 1)^* \\
& \hspace{5 mm} =\sum_{\rho'+\rho''=(0,0)}\chi_{h,1}(-q)^{1(0-0+1)}(-q)^{-1} (x_1 x_2 \otimes hg) (\epsilon_{0,0} \otimes 1)^* \\
&=\chi_{h,1}(x_1 x_2 \otimes hg)(\epsilon_{0,0} \otimes 1)^* \\
&=(x_1 x_2 \otimes hg)(\epsilon_{0,0} \otimes 1)^*
\end{align*} 
and 
\begin{align*}
(x_2 \otimes h)(\epsilon_{0,1} \otimes 1)^* &\circ (x_1 x_2 \otimes g)(\epsilon_{0,0} \otimes 1)^*= (x_1 x_2 \otimes hg)(\epsilon_{0,0} \otimes 1)^*, \\
(x_1 \otimes h)(\epsilon_{1,0} \otimes 1)^* &\circ (x_1 \otimes g)(\epsilon_{1,0} \otimes 1)^* =(x_1 \otimes hg)(\epsilon_{1,0} \otimes 1)^*, \\
(x_1 \otimes h)(\epsilon_{1,0} \otimes 1)^* &\circ (x_1 x_2 \otimes g)(\epsilon_{0,0} \otimes 1)^* =(x_1 x_2 \otimes hg)(\epsilon_{0,0} \otimes 1)^*, \\
(x_1 x_2 \otimes h)(\epsilon_{1,1} \otimes 1)^* &\circ (x_1 \otimes g)(\epsilon_{1,0} \otimes 1)^* =(x_1 x_2 \otimes hg)(\epsilon_{1,1} \otimes 1)^*, \\
(x_1 \otimes g)(\epsilon_{1,0} \otimes 1)^* &\circ (x_1 x_2 \otimes h)(\epsilon_{1,1} \otimes 1)^* =(x_1 x_2 \otimes gh)(\epsilon_{1,1} \otimes 1)^*, \\
(x_2 \otimes h)(\epsilon_{0,1} \otimes 1)^* &\circ (x_2 \otimes g)(\epsilon_{0,1} \otimes 1)^* =(x_2 \otimes hg)(\epsilon_{0,1} \otimes 1)^*, \\
(x_1 x_2 \otimes h)(\epsilon_{1,1} \otimes 1)^* &\circ (x_2 \otimes g)(\epsilon_{0,1} \otimes 1)^* =(x_1 x_2 \otimes hg)(\epsilon_{1,1} \otimes 1)^*, \textrm{ and}\\
(x_2 \otimes g)(\epsilon_{0,1} \otimes 1)^* &\circ (x_1 x_2 \otimes h)(\epsilon_{1,1} \otimes 1)^* =(x_1 x_2 \otimes gh)(\epsilon_{1,1} \otimes 1)^*. 
\end{align*}

Using our formula (\ref{phibrack}), modifying for this notation, \begin{align*} [(x^{\alpha} \otimes g)(\epsilon_{\beta} \otimes 1)^*,& (x^{\gamma} \otimes h)(\epsilon_{\kappa} \otimes 1)^*]\\
&= (x^{\alpha} \otimes g)(\epsilon_{\beta} \otimes 1)^* \circ (x^{\gamma} \otimes h)(\epsilon_{\kappa} \otimes 1)^* \\
& \hspace{5 mm} - (-1)^{(|\kappa|-1)(|\beta|-1)} (x^{\gamma} \otimes h)(\epsilon_{\kappa} \otimes 1)^* \circ (x^{\alpha} \otimes g)(\epsilon_{\beta} \otimes 1)^*,
\end{align*} we can complete the bracket computations.  The non-zero brackets among pairs of the generators of $\HHD(\Lambda_{\mathbf{q}}^2 \rtimes G)$ are 
\begin{align*}
[(x_2 \otimes h)(\epsilon_{0,1} \otimes 1)^* ,& (x_1 x_2 \otimes g)(\epsilon_{0,0} \otimes 1)^*] = (x_1 x_2 \otimes hg)(\epsilon_{0,0} \otimes 1)^* \textrm{ and} \\
[(x_1 \otimes h)(\epsilon_{1,0} \otimes 1)^* ,& (x_1 x_2 \otimes g)(\epsilon_{0,0} \otimes 1)^*] = (x_1 x_2 \otimes hg) (\epsilon_{0,0} \otimes 1)^*. 
\end{align*}

Notice that the bracket and cup product computations agree with the results in \cite[Section~5.1]{GNW} and \cite[Section~2.1]{BGMS} respectively when $G=1$.


\section{A specific group action on $\Lambda_{\q}^2$}\label{equal}

We will now consider a specific example, on $\Lambda_{\q}^2$, to study the structure in more depth.  As in the previous section, to ease notation, let $q_{1,2}=q$.  Let $G=\langle g \rangle$ be the cyclic group generated by an element $g$.  Assume $|G|=m$ and that $G$ acts on $\Lambda_{\q}^2$ by $^g x_1 = q x_1$ and $^g x_2 = q^{-1} x_2$.  That is, $G$ acts on $\Lambda_{\q}^2$ by its quantum coefficient.  And let $q$ be a primitive $d$th root of unity.  

By Theorem \ref{vsthm}, we know that $\HH^m(\Lambda_{\q}^2 \rtimes G)$ is isomorphic to the $G$-invariants of $$\bigoplus_{i=1}^d \bigoplus_{\substack{\beta \in \mathbb{N}^2 \\ |\beta|=d}} \bigoplus_{\substack{\alpha \in \{0,1\}^2 \\ \beta-\alpha \in C_{g^i}}} span_k \{(x^{\alpha} \otimes g^i) \epsilon_{\beta}^*\}$$ where 
\begin{align*} C_{g^i}=& \{\gamma \in (\N \cup \{-1\})^2 | ~\forall~l, \gamma_l =-1 \textrm{ or, for } k \neq l, (-1)^{\gamma_l} (-q_{k, l})^{\gamma_k} = q_{l,k}^i \} \\
=& \{\gamma \in (\N \cup \{-1\})^2 | ~\forall~l, \gamma_l =-1 \textrm{ or, for } k \neq l, q_{k, l}^{\gamma_k+i} = (-1)^{|\gamma|} \} \\
=& \{\gamma \in (\N \cup \{-1\})^2 | ~\gamma_1 =-1 \textrm{ or } q^{\gamma_2+i} = (-1)^{|\gamma|} \textrm{ and }\\
& \hspace{35 mm} \gamma_2 =-1 \textrm{ or } q^{\gamma_1+i} = (-1)^{|\gamma|} \}. \\
\end{align*}

Therefore to have a non-trivial element of cohomology, we need to first check the conditions for which $\gamma \in (\N \cup \{-1\})^2$ satisfies $\gamma_1 =-1$ or $q^{\gamma_2+i} = (-1)^{|\gamma|}$ and $\gamma_2 =-1$ or $q^{\gamma_1+i} = (-1)^{|\gamma|}$ and then check for terms that are $G$-invariant.

To compute the cup product, by Theorem \ref{cupthm}, we know that for $\alpha, \gamma \in \{0,1\}^2$, $\beta, \kappa \in \mathbb{N}^2$, and $g^i, g^j \in G$,  
\begin{align*} (x^{\alpha} \otimes g^i) \epsilon_{\beta}^* \smile (x^{\gamma} \otimes g^j) \epsilon_{\kappa}^* =& \prod_{l=1}^2 \prod_{l \neq k} q_{l,k}^{i\gamma_l} \prod_{k<l} q_{k,l}^{\kappa_k \beta_l-\gamma_k \alpha_l}(-1)^{-\gamma_k \alpha_l} (x^{\alpha+\gamma} \otimes g^{i+j}) \epsilon_{\beta +\kappa}^* \\
=& q^{i(\gamma_1-\gamma_2) + \kappa_1 \beta_2-\gamma_1 \alpha_2}(-1)^{-\gamma_1 \alpha_2} (x^{\alpha+\gamma} \otimes g^{i+j}) \epsilon_{\beta +\kappa}^*. \\
\end{align*}

And, under these assumptions, Theorem \ref{circthm} becomes, for $\alpha, \gamma \in \{0,1\}^2$, $\beta, \kappa \in \mathbb{N}^2$, and $g^i, g^j \in G$
\begin{align*}
(x^{\alpha} \otimes g^i)(\epsilon_{\beta} \otimes 1)^* &\circ_{\tilde{\phi}} (x^{\gamma} \otimes g^j)(\epsilon_{\kappa} \otimes 1)^* \\
=& \sum_{r=1}^2 \sum_{\substack{\rho'+ \rho'' = \kappa+ \beta-[r]  \\ (\rho'-\beta)_{\ell} \geq 0 ~\forall \ell \in \{1,2\}}} (-1)^{|\rho'-\beta|(|\beta|+1)} \delta_{\rho'_{r+1}, \beta_{r+1}}...\delta_{\rho'_n, \beta_n} \delta_{\rho''_1+ ..+ \rho''_{r-1}, 0} \delta_{\alpha_r, 1}  \\
& \hspace{5 mm} \prod_{1 \leq s <r} q_{s,r}^{j\alpha_s}(-q_{s, r})^{\alpha_s(\rho'_r-\beta_{r}+1)} \prod_{r < s \leq 2} (-q_{r, s})^{\alpha_s(\rho''_{r}+1)} \\
& \hspace{5 mm} \prod_{\substack{1 \leq s <r \\s < v \leq 2}} (-q_{s,v})^{-\alpha_s \gamma_v} (x^{\alpha + \gamma -[r]} \otimes g^{i+j})(\epsilon_{\kappa+\beta-[r]} \otimes 1)^* \\
=& \sum_{\substack{\rho'+ \rho'' = \kappa+ \beta-[1]  \\ (\rho'-\beta)_{\ell} \geq 0 ~\forall \ell \in \{1,2\}}} (-1)^{|\rho'-\beta|(|\beta|+1)} \delta_{\rho'_{2}, \beta_{2}} \delta_{\alpha_1, 1} (-q_{1, 2})^{\alpha_2(\rho''_{1}+1)}  \\
& \hspace{10 mm} (x^{\alpha + \gamma -[1]} \otimes g^{i+j})(\epsilon_{\kappa+\beta-[1]} \otimes 1)^* \\
& \hspace{5 mm} + \sum_{\substack{\rho'+ \rho'' = \kappa+ \beta-[2]  \\ (\rho'-\beta)_{\ell} \geq 0 ~\forall \ell \in \{1,2\}}} (-1)^{|\rho'-\beta|(|\beta|+1)} \delta_{\rho''_1, 0} \delta_{\alpha_2, 1} q^{j\alpha_2}(-q)^{\alpha_1(\rho'_2-\beta_{2}+1)-\alpha_1 \gamma_2} \\
& \hspace{10 mm}  (x^{\alpha + \gamma -[2]} \otimes g^{i+j})(\epsilon_{\kappa+\beta-[2]} \otimes 1)^* .\\
\end{align*} 

As in \cite{BGMS} and \cite{GNW}, we study the specific structure of Hochschild cohomology for each choice of $d$.

\subsection{$d>1$ is odd}

If $d>1$ is odd, then 
\begin{align*}
C_{g^i} =& \{\gamma \in (\N \cup \{-1\})^2 | ~\gamma_1 =-1 \textrm{ or } (\exists~ t, k \in \N, \gamma_2+i = td \textrm{ and } |\gamma| = 2k) \textrm{ and }\\
& \hspace{35 mm} \gamma_2 =-1 \textrm{ or } (\exists~t, k \in \N, \gamma_1+i= td \textrm{ and } |\gamma| = 2k) \} \\
=& \{\gamma \in (\N \cup \{-1\})^2 | ~\gamma_1 =-1 \textrm{ and } \gamma_2=-1 \textrm{ or }\\
& \hspace{35 mm} \gamma_1 =-1 \textrm{ and } \exists~ t \in \N, \gamma_2 = 2t+1 \textrm{ and } i=1 \textrm{ or } \\
& \hspace{35 mm} \gamma_2 =-1 \textrm{ and } \exists~ t \in \N, \gamma_1 = 2t+1 \textrm{ and } i=1 \textrm{ or } \\
& \hspace{35 mm} \exists~ t, t' \in \N, \gamma_1 = td-i \textrm{ and } \gamma_2 = t'd-i \textrm{ and } t+t' \textrm{ even}\}. \\
\end{align*}

From this description, we immediately get  
\begin{align*} HH^*(\Lambda_{\q}^2, \Lambda_{\q}^2 \rtimes G) \cong & \bigoplus_{t \in \N} span_k \{(x_1 \otimes g) \epsilon_{0,2t+1}^*, (x_2 \otimes g) \epsilon_{2t+1, 0}^*, (x_1 x_2 \otimes g)\epsilon_{2t, 0}^*, \\
& \hspace{10 mm} (x_1 x_2 \otimes g) \epsilon_{0,2t}^* \} \\
&\bigoplus_{i=1}^d \bigoplus_{\substack{ t, t' \in \N \\ t+ t' \textrm{ even}}} span_k\{ (1 \otimes g^i) \epsilon_{td-i, t'd-i}^*, (x_1 \otimes g^i) \epsilon_{td-i+1, t'd-i}^*, \\
& \hspace{20 mm} (x_2 \otimes g^i) \epsilon_{td-i, t'd-i+1}^*, (x_1 x_2 \otimes g^i) \epsilon_{td-i+1, t'd-i+1}^* \} \\
& \bigoplus_{i=1}^d span_k \{ (x_1 x_2 \otimes g^i)\epsilon_{0,0}^*, (1 \otimes g^i) \epsilon_{0,0}^* \} \\
\end{align*} as a vector space.

Then, after taking $G$-invariants, we get
\begin{align*} HH^*(\Lambda_{\q}^2 \rtimes G) \cong & \bigoplus_{t \in \N} span_k \{(x_1 \otimes g) \epsilon_{0,2td-1}^*, (x_2 \otimes g) \epsilon_{2td-1, 0}^*, (x_1 x_2 \otimes g)\epsilon_{2td, 0}^*, (x_1 x_2 \otimes g) \epsilon_{0,2td}^* \} \\
&\bigoplus_{i=1}^d \bigoplus_{\substack{ t, t' \in \N \\ t+ t' \textrm{ even}}} span_k\{ (1 \otimes g^i) \epsilon_{td-i, t'd-i}^*, (x_1 \otimes g^i) \epsilon_{td-i+1, t'd-i}^*, \\
& \hspace{20 mm} (x_2 \otimes g^i) \epsilon_{td-i, t'd-i+1}^*, (x_1 x_2 \otimes g^i) \epsilon_{td-i+1, t'd-i+1}^* \} \\
& \bigoplus_{i=1}^d span_k \{ (x_1 x_2 \otimes g^i)\epsilon_{0,0}^*, (1 \otimes g^i) \epsilon_{0,0}^* \} \\
\end{align*} as a vector space.

We will forgo the cup product structure in favor of showing the bracket structure.  The non-zero brackets are, for $t, t', t'', t''' \in \N$ and $i, j \in \{1, 2, ..., d\}$,
\begin{flalign*} 
&[(x_2 \otimes g) \epsilon_{2t'd-1, 0}^*, (x_1 \otimes g) \epsilon_{0,2td-1}^*] = (2t'd-1)(x_2 \otimes g^2) \epsilon_{2t'd-2, 2td-1}^*,&\\
&\hspace{65 mm} -(2td-1)q(x_1 \otimes g^2) \epsilon_{2t'd-1, 2td-2}^* ,& \\
&[(x_1 x_2 \otimes g) \epsilon_{2t'd, 0}^*, (x_1 \otimes g) \epsilon_{0,2td-1}^*] = (2t'd)(x_1 x_2 \otimes g^2) \epsilon_{2t'd-1, 2td-1}^*,&\\
&[(1 \otimes g^i) \epsilon_{t'd-i, t''d-i}^*, (x_1 \otimes g) \epsilon_{0,2td-1}^*] = (t'd-i)(1 \otimes g^{i+1}) \epsilon_{t'd-i-1, 2td+t''d-i-1}^* ,&\\
&[(x_1 \otimes g^i) \epsilon_{t'd-i+1, t''d-i}^*, (x_1 \otimes g) \epsilon_{0,2td-1}^*] = (t'd-i+1)(x_1 \otimes g^{i+1}) \epsilon_{t'd-i, 2td+t''d-i-1}^* ,&\\
&[(x_2 \otimes g^i) \epsilon_{t'd-i, t''d-i+1}^*, (x_1 \otimes g) \epsilon_{0,2td-1}^*] =  (t'd-i)(x_2 \otimes g^{i+1}) \epsilon_{t'd-i-1, 2td+t''d-i}^*&\\
&\hspace{70 mm}-(2td-1)q(x_1 \otimes g^{i+1}) \epsilon_{t'd-i, 2td+t''d-i}^* ,&\\
&[(x_1 x_2 \otimes g^i) \epsilon_{t'd-i+1, t''d-i+1}^*, (x_1 \otimes g) \epsilon_{0,2td-1}^*] = (t'd-i+1)(x_1 x_2 \otimes g^{i+1}) \epsilon_{t'd-i, 2td+t''d-i}^* ,&\\
&[(x_1 x_2 \otimes g^i) \epsilon_{0, 2t'd}^*, (x_2 \otimes g) \epsilon_{0,2td-1}^*] = (2t'd)q (x_1 x_2 \otimes g^2) \epsilon_{2td-1, 2t'd-1}^* ,&\\
&[(1 \otimes g^i) \epsilon_{t'd-i, t''d-i}^*, (x_2 \otimes g) \epsilon_{0,2td-1}^*] = (t''d-i) q^i (1 \otimes g^{i+1}) \epsilon_{2td+t'd-i-1, t''d-i-1}^* ,&\\
&[(x_1 \otimes g^i) \epsilon_{t'd-i+1, t''d-i}^*, (x_2 \otimes g) \epsilon_{0,2td-1}^*] = (t''d-i)q^i (x_1 \otimes g^{i+1}) \epsilon_{2td+t'd-i, t''d-i-1}^* &\\
& \hspace{75 mm} -(2td-1)(x_2 \otimes g^{i+1}) \epsilon_{2td+t'd-i-1, t''d-i}^*, & \\
&[(x_2 \otimes g^i) \epsilon_{t'd-i, t''d-i+1}^*, (x_2 \otimes g) \epsilon_{0,2td-1}^*] = (t''d-i+1) q^i (x_2 \otimes g^{i+1}) \epsilon_{2td+t'd-i-1, t''d-i}^*,&\\
&[(x_1 x_2 \otimes g^i) \epsilon_{t'd-i+1, t''d-i+1}^*, (x_2 \otimes g) \epsilon_{0,2td-1}^*] \\
&\hspace{30 mm}= (t''d-i+1)q^i (x_1 x_2 \otimes g^{i+1}) \epsilon_{2td+t'd-i, t''d-i}^* ,&\\
&[(1 \otimes g^i) \epsilon_{t'd-i, t''d-i}^*, (x_1 x_2 \otimes g) \epsilon_{2td,0}^*] &\\
&\hspace{30 mm} = (-1)^{t'd-i+1} \sum_{\rho'_2=0}^{ t''d-i-1} q^{\rho'_2 +i + 1}(x_1 \otimes g^{i+1}) \epsilon_{2td+t'd-i, t''d-i-1}^* ,&\\
&[(x_1 \otimes g^i) \epsilon_{t'd-i+1, t''d-i}^*, (x_1 x_2 \otimes g) \epsilon_{2td,0}^*]  &\\
&\hspace{30 mm} = -(2td) (x_1 x_2 \otimes g^{i+1}) \epsilon_{2td+t'd-i, t''d-i}^* ,&\\
&[(x_2 \otimes g^i) \epsilon_{t'd-i, t''d-i+1}^*, (x_1 x_2 \otimes g) \epsilon_{2td,0}^*] &\\ 
&\hspace{30 mm} = (-1)^{t'd-i} \sum_{\rho'_2 = 0}^{t''d-i} q^{\rho'_2 +i} (x_1 x_2 \otimes g^{i+1}) \epsilon_{2td+t'd-i, t''d-i}^* ,&\\
&[(1 \otimes g^i) \epsilon_{t'd-i, t''d-i}^*, (x_1 x_2 \otimes g) \epsilon_{0,2td}^*] &\\
&\hspace{30 mm} = (-1)^{t'd-i} \sum_{\rho''_1 = 0}^{t'd -i-1 } q^{\rho''_1+1} (x_2 \otimes g^{i+1}) \epsilon_{t'd-i-1, 2td+t''d-i}^* & \\
&\hspace{35 mm} -(-1)^{t'd-i} \sum_{\rho'_2=2td}^{t''d-i-1} q^{\rho'_2 -2td+i + 1}(x_1 \otimes g^{i+1}) \epsilon_{t'd-i, 2td+t''d-i-1}^* ,&\\
&[(x_1 \otimes g^i) \epsilon_{t'd-i+1, t''d-i}^*, (x_1 x_2 \otimes g) \epsilon_{0,2td}^*] &\\
&\hspace{30 mm} = (-1)^{t'd-i+1} \sum_{\rho''_1 = 0}^{t'd -i } q^{\rho''_1+1} (x_1 x_2 \otimes g^{i+1}) \epsilon_{t'd-i-1, 2td+t''d-i}^* ,& \\
&[(x_2 \otimes g^i) \epsilon_{t'd-i, t''d-i+1}^*, (x_1 x_2 \otimes g) \epsilon_{0,2td}^*] & \\ 
&\hspace{30 mm} =(-1)^{t'd-i} \sum_{\rho'_2= 2td}^{2td+ t''d-i} q^{\rho'_2 -2td+i }(x_1 x_2 \otimes g^{i+1}) \epsilon_{t'd-i, 2td+t''d-i}^* &\\
& \hspace{35 mm} -(2td)q (x_1 x_2 \otimes g^{i+1}) \epsilon_{t'd-i, 2td+t''d-i}^* ,&\\
&[(x_1 \otimes g^j) \epsilon_{t''d-j+1, t'''d-j}^*, (1 \otimes g^i) \epsilon_{td-i,t'd-i}^*] \\
&\hspace{30 mm}= -(td-i)(1 \otimes g^{i+j}) \epsilon_{(t+t'')d-i-j, (t'+t''')d-i-j}^* ,&\\
&[(x_2 \otimes g^j) \epsilon_{t'd-j, t''d-j+1}^*, (1 \otimes g^i) \epsilon_{td-i,t'd-i}^*] \\
&\hspace{30 mm}= -(t'd-i)q^i(1 \otimes g^{i+j}) \epsilon_{(t+t'')d-i-j, (t'+t''')d-i-j}^* ,&\\
&[(x_1 x_2 \otimes g^j) \epsilon_{t'd-j+1, t''d-j+1}^*, (1 \otimes g^i) \epsilon_{td-i,t'd-i}^*]  &\\
&\hspace{10 mm} = (-1)^{t''d-j+1} \sum_{\rho''_1 = 0}^{td -j-1} q^{\rho''_1+1} (x_2 \otimes g^{i+j}) \epsilon_{(t+t'')d-i-j, (t'+t''')d-i-j+1}^* & \\
&\hspace{15 mm} +(-1)^{td-i} \sum_{\rho'_2=t'''d-i+1}^{(t'+t''')d-i-j} q^{\rho'_2-t'''d +i + j}(x_1 \otimes g^{i+j}) \epsilon_{(t+t'')d-i-j+1, (t'+t''')d-i-j}^* ,&\\
&[(x_1 x_2 \otimes g^j) \epsilon_{0,0}^*, (1 \otimes g^i) \epsilon_{td-i,t'd-i}^*] &\\
&\hspace{30 mm} = (-1)^{td-i} \sum_{\rho''_1 =0}^{td -i-1 } q^{\rho''_1+1} (x_2 \otimes g^{i+j}) \epsilon_{td-i-1, t'd-i}^* & \\
&\hspace{35 mm} +(-1)^{td-i} \sum_{\rho'_2= 0}^{t'd-i-1 } q^{\rho'_2+i + 1}(x_1 \otimes g^{i+j}) \epsilon_{td-i, t'd-i-1}^* ,&\\
&[(x_1 \otimes g^j) \epsilon_{t''d-j+1, t'''d-j}^*, (x_1 \otimes g^i) \epsilon_{td-i+1,t'd-i}^*] & \\
& \hspace{30 mm} = ((t+t'')d - i -j) (x_1 \otimes g^{i+j}) \epsilon_{(t+t'')d-i-j+1, (t'+t''')d-i-j}^*,&\\
&[(x_2 \otimes g^j) \epsilon_{t'd-j, t''d-j+1}^*, (x_1 \otimes g^i) \epsilon_{td-i+1,t'd-i}^*] & \\
& \hspace{30 mm} = (t''d-j)(x_2 \otimes g^{i+j}) \epsilon_{(t+t'')d-i-j, (t'+t''')d-i-j+1}^*&\\
& \hspace{35 mm} -(t'd-i)q^i(x_1 \otimes g^{i+j}) \epsilon_{(t+t'')d-i-j+1, (t'+t''')d-i-j}^* ,&\\
&[(x_1 x_2 \otimes g^j) \epsilon_{t'd-j+1, t''d-j+1}^*, (x_1 \otimes g^i) \epsilon_{td-i+1,t'd-i}^*] &\\
&\hspace{10 mm} = (t''d-j+1)(x_1 x_2 \otimes g^{i+j}) \epsilon_{(t+t'')d-i-j+1, (t'+t''')d-i-j+1}^*&\\
& \hspace{15 mm} +(-1)^{td-i} \sum_{\rho''_1 =0}^{ td-i} q^{\rho''_1+1} (x_1 x_2 \otimes g^{i+j}) \epsilon_{(t+t'')d-i-j+1, (t'+t''')d-i-j+1}^* ,&\\
&[ (x_1 x_2 \otimes g^j) \epsilon_{0,0}^*,(x_1 \otimes g^i) \epsilon_{td-i+1,t'd-i}^*] &\\
&\hspace{30 mm}=(-1)^{td-i} \sum_{\rho''_1 = 0}^{td-i} q^{\rho''_1 + 1} (x_1 x_2 \otimes g^{i+j}) \epsilon_{td-i, t'd-i}^* ,&\\
&[(x_2 \otimes g^j) \epsilon_{t'd-j, t''d-j+1}^*, (x_2 \otimes g^i) \epsilon_{td-i,t'd-i+1}^*] & \\
& \hspace{30 mm} [(t'''d-j+1)q^j-(t'd-i+1)q^i] (x_2 \otimes g^{i+j}) \epsilon_{(t+t'')d-i-j, (t'+t''')d-i-j+1}^*,&\\
&[(x_1 x_2 \otimes g^j) \epsilon_{t'd-j+1, t''d-j+1}^*, (x_2 \otimes g^i) \epsilon_{td-i,t'd-i+1}^*]  &\\
&\hspace{10 mm} = (t'''d-j+1)q^j (x_1 x_2 \otimes g^{i+j}) \epsilon_{(t+t'')d-i-j+1, (t'+t''')d-i-j+1}^*&\\
& \hspace{15 mm} -(-1)^{td-i} \sum_{\rho''_2 =0}^{t'd-j} q^{\rho''_2-t''d+j+i-1} (x_1 x_2 \otimes g^{i+j}) \epsilon_{(t+t'')d-i-j+1, (t'+t''')d-i-j+1}^* ,\\
& \textrm{and}\\
&[(x_1 x_2 \otimes g^j) \epsilon_{0,0}^*, (x_2 \otimes g^i) \epsilon_{td-i,t'd-i+1}^*] &\\
&\hspace{30 mm} = (-1)^{td-i+1} \sum_{\rho'_2=0}^{t'd-i} q^{\rho'_2+i} (x_1 x_2 \otimes g^{i+j}) \epsilon_{td-i, t'd-i}^*. &
\end{flalign*}

We can compare these results to the computations in \cite[Section~5.3]{GNW} and \cite[Section~3.1]{BGMS} when $i=0$.

\subsection{$d>2$ is even}

If $d>2$ is even, then 
\begin{align*}
C_{g^i} =& \{\gamma \in (\N \cup \{-1\})^2 | ~\gamma_1 =-1 \textrm{ or } (\exists~ t, t' \in \N, \gamma_2+i = t\frac{d}{2} \textrm{ and } |\gamma| = t' \\
& \hspace{55 mm} \textrm{ and } t+t' \textrm{ even}) \textrm{ and }\\
& \hspace{35 mm} \gamma_2 =-1 \textrm{ or } (\exists~ t, t' \in \N, \gamma_1+i = t\frac{d}{2} \textrm{ and } |\gamma| = t' \\
& \hspace{55 mm}  \textrm{ and } t+t' \textrm{ even}) \}\\
=& \{\gamma \in (\N \cup \{-1\})^2 | ~\gamma_1 =-1 \textrm{ and } \gamma_2=-1 \textrm{ or }\\
& \hspace{35 mm} \gamma_1 =-1 \textrm{ and } \exists~ t \in \N, \gamma_2 = t'+1 \textrm{ and } \\
& \hspace{65 mm} i=t\frac{d}{2}+ 1 \textrm{ and } t+t' \textrm{ even } \textrm{ or } \\
& \hspace{35 mm} \gamma_2 =-1 \textrm{ and } \exists~ t \in \N, \gamma_1 = t'+1 \textrm{ and } \\
& \hspace{65 mm} i=t\frac{d}{2}+ 1 \textrm{ and } t+t' \textrm{ even } \textrm{ or } \\
& \hspace{35 mm} \exists~ t, t' \in \N, \gamma_1 = t\frac{d}{2}-i \textrm{ and } \gamma_2 = t'\frac{d}{2}-i \textrm{ and } t+t' \textrm{ even}\}. \\
\end{align*}

From this description, we immediately get  
\begin{align*} HH^*(\Lambda_{\q}^2, \Lambda_{\q}^2 \rtimes G) \cong & \bigoplus_{t \in \N} span_k \{(x_1 \otimes g) \epsilon_{0,2t+1}^*, (x_1 \otimes g^{\frac{d}{2}+1}) \epsilon_{0,2t}^*, (x_2 \otimes g) \epsilon_{2t+1, 0}^*, \\
& \hspace{10 mm} (x_2 \otimes g^{\frac{d}{2}+1}) \epsilon_{2t, 0}^*, (x_1 x_2 \otimes g)\epsilon_{2t, 0}^*, (x_1 x_2 \otimes g^{\frac{d}{2}+1})\epsilon_{2t+1, 0}^*, \\
& \hspace{10 mm} (x_1 x_2 \otimes g) \epsilon_{0,2t}^*, (x_1 x_2 \otimes g^{\frac{d}{2}+1}) \epsilon_{0,2t+1}^* \} \\
&\bigoplus_{i=1}^d \bigoplus_{\substack{ t, t' \in \N \\ t+ t' \textrm{ even}}} span_k\{ (1 \otimes g^i) \epsilon_{t\frac{d}{2}-i, t'\frac{d}{2}-i}^*, (x_1 \otimes g^i) \epsilon_{t\frac{d}{2}-i+1, t'\frac{d}{2}-i}^*, \\
& \hspace{20 mm} (x_2 \otimes g^i) \epsilon_{t\frac{d}{2}-i, t'\frac{d}{2}-i+1}^*, (x_1 x_2 \otimes g^i) \epsilon_{t\frac{d}{2}-i+1, t'\frac{d}{2}-i+1}^* \} \\
& \bigoplus_{i=1}^d span_k \{ (x_1 x_2 \otimes g^i)\epsilon_{0,0}^*, (1 \otimes g^i) \epsilon_{0,0}^* \} \\
\end{align*} as a vector space.

Then, after taking $G$-invariants, we get
\begin{align*} HH^*(\Lambda_{\q}^2 \rtimes G) \cong & \bigoplus_{t \in \N} span_k \{(x_1 \otimes g) \epsilon_{0,td-1}^*, (x_1 \otimes g^{\frac{d}{2}-1}) \epsilon_{0,td}^*, (x_2 \otimes g) \epsilon_{td-1, 0}^*, \\
& \hspace{10 mm} (x_2 \otimes g^{\frac{d}{2}+1}) \epsilon_{td, 0}^*, (x_1 x_2 \otimes g)\epsilon_{td, 0}^*, (x_1 x_2 \otimes g^{\frac{d}{2}+1})\epsilon_{td-1, 0}^*, \\
& \hspace{10 mm} (x_1 x_2 \otimes g) \epsilon_{0,td}^*, (x_1 x_2 \otimes g^{\frac{d}{2}+1}) \epsilon_{0,td-1}^* \} \\
&\bigoplus_{i=1}^d \bigoplus_{\substack{ t, t' \in \N \\ t+ t' \textrm{ even}}} span_k\{ (1 \otimes g^i) \epsilon_{t\frac{d}{2}-i, t'\frac{d}{2}-i}^*, (x_1 \otimes g^i) \epsilon_{t\frac{d}{2}-i+1, t'\frac{d}{2}-i}^*, \\
& \hspace{20 mm} (x_2 \otimes g^i) \epsilon_{t\frac{d}{2}-i, t'\frac{d}{2}-i+1}^*, (x_1 x_2 \otimes g^i) \epsilon_{t\frac{d}{2}-i+1, t'\frac{d}{2}-i+1}^* \} \\
& \bigoplus_{i=1}^d span_k \{ (x_1 x_2 \otimes g^i)\epsilon_{0,0}^*, (1 \otimes g^i) \epsilon_{0,0}^* \} \\
\end{align*} as a vector space.

\subsection{$q=-1$}

Now assume $q=-1$.  That is, $d=2$ and we are in the commutative truncated polynomial case.  Then 
\begin{align*}
C_{g^i} =& \{\gamma \in (\N \cup \{-1\})^2 | ~\gamma_1 =-1 \textrm{ or } \exists~ t \in \N, \gamma_1-i = 2t \textrm{ and } \\
& \hspace{35 mm} \gamma_2 =-1 \textrm{ or } \exists~ t \in \N, \gamma_2-i = 2t \}\\
=& \{\gamma \in (\N \cup \{-1\})^2 | ~\gamma_1 =-1 \textrm{ and } \gamma_2=-1 \textrm{ or }\\
& \hspace{35 mm} \gamma_1 =-1 \textrm{ and } \exists~ t \in \N, \gamma_2 = 2t+i \textrm{ or } \\
& \hspace{35 mm} \gamma_2 =-1 \textrm{ and } \exists~ t \in \N, \gamma_1 = 2t+i \textrm{ or } \\
& \hspace{35 mm} \exists~ t, t' \in \N, \gamma_1 = 2t+i \textrm{ and } \gamma_2 = 2t'+i \}. \\
\end{align*}

From this description, we immediately get  
\begin{align*} HH^*(\Lambda_{\q}^2, \Lambda_{\q}^2 \rtimes G) \cong & \bigoplus_{i=1}^2 \bigoplus_{t \in \N} span_k \{(x_1 \otimes g^i) \epsilon_{0,2t+i}^*, (x_2 \otimes g^i) \epsilon_{2t+i, 0}^*, (x_1 x_2 \otimes g^i)\epsilon_{2t+i+1, 0}^*, \\
& \hspace{10 mm} (x_1 x_2 \otimes g^i) \epsilon_{0,2t+i+1}^* \} \\
&\bigoplus_{i=1}^2 \bigoplus_{t, t' \in \N} span_k\{ (1 \otimes g^i) \epsilon_{2t+i, 2t'+i}^*, (x_1 \otimes g^i) \epsilon_{2t+i+1, 2t'+i}^*, \\
& \hspace{20 mm} (x_2 \otimes g^i) \epsilon_{2t+i, 2t'+i+1}^*, (x_1 x_2 \otimes g^i) \epsilon_{2t+i+1, 2t'+i+1}^* \} \\
& \bigoplus_{i=1}^d span_k \{ (x_1 x_2 \otimes g^i)\epsilon_{0,0}^*, (1 \otimes g^i) \epsilon_{0,0}^* \} \\
\end{align*} as a vector space.

Then, after taking $G$-invariants, we get
\begin{align*} HH^*(\Lambda_{\q}^2 \rtimes G) \cong & \bigoplus_{t \in \N} span_k \{(x_1 \otimes g) \epsilon_{0,2t+1}^*, (x_2 \otimes g) \epsilon_{2t+1, 0}^*, (x_1 x_2 \otimes g)\epsilon_{2t, 0}^*, \\
& \hspace{10 mm} (x_1 x_2 \otimes g) \epsilon_{0,2t}^* \} \\
&\bigoplus_{i=1}^2 \bigoplus_{t, t' \in \N} span_k\{ (1 \otimes g^i) \epsilon_{2t+i, 2t'+i}^*, (x_1 \otimes g^i) \epsilon_{2t+i+1, 2t'+i}^*, \\
& \hspace{20 mm} (x_2 \otimes g^i) \epsilon_{2t+i, 2t'+i+1}^*, (x_1 x_2 \otimes g^i) \epsilon_{2t+i+1, 2t'+i+1}^* \} \\
& \bigoplus_{i=1}^d span_k \{ (x_1 x_2 \otimes g^i)\epsilon_{0,0}^*, (1 \otimes g^i) \epsilon_{0,0}^* \} \\
\end{align*} as a vector space.

\subsection{$q=1$}

Finally, if $q=1$, we are considering the truncated skew polynomial ring.  In this case, 
\begin{align*}
C_{1} =& \{\gamma \in (\N \cup \{-1\})^2 | ~\gamma_1 =-1 \textrm{ or } \exists~ t \in \N, \gamma_1+\gamma_2 = 2t \textrm{ and } \\
& \hspace{35 mm} \gamma_2 =-1 \textrm{ or } \exists~ t \in \N, \gamma_1+\gamma_2 = 2t \}\\
=& \{\gamma \in (\N \cup \{-1\})^2 | ~\gamma_1 =-1 \textrm{ and } \gamma_2=-1 \textrm{ or }\\
& \hspace{35 mm} \gamma_1 =-1 \textrm{ and } \exists~ t \in \N, \gamma_2 = 2t+1 \textrm{ or } \\
& \hspace{35 mm} \gamma_2 =-1 \textrm{ and } \exists~ t \in \N, \gamma_1 = 2t+1 \textrm{ or } \\
& \hspace{35 mm} \exists~ t \in \N, \gamma_1 +\gamma_2= 2t \}. \\
\end{align*}

From this description, we immediately get  
\begin{align*} HH^*(\Lambda_{\q}^2 \rtimes G) \cong & \bigoplus_{t \in \N} span_k \{(x_1 \otimes 1) \epsilon_{0,2t+1}^*, (x_2 \otimes 1) \epsilon_{2t+1, 0}^*, (x_1 x_2 \otimes 1)\epsilon_{2t, 0}^*, \\
& \hspace{10 mm} (x_1 x_2 \otimes 1) \epsilon_{0,2t}^* \} \\
& \bigoplus_{\substack{t, t' \in \N \\ t+t' \textrm{ even }}} span_k\{ (1 \otimes 1) \epsilon_{t, t'}^*, (x_1 \otimes 1) \epsilon_{t+1, t'}^*, (x_2 \otimes 1) \epsilon_{t, t'+1}^*, \\
& \hspace{10 mm} (x_1 x_2 \otimes 1) \epsilon_{t+1, t'+1}^* \} \\
& \bigoplus span_k \{ (x_1 x_2 \otimes 1)\epsilon_{0,0}^*, (1 \otimes 1)\epsilon_{0,0}^* \} \\
\end{align*} as a vector space.

Because the group action is trivial in this case, we can directly compare this result to \cite[Section~3.5]{BGMS} and \cite[Section~5.6]{GNW}.


\section{Acknowledgement}
I would like to thank Sarah Witherspoon for her guidance throughout this work and in the preparation of this document.


\end{document}